\newtheorem{theorem}{Theorem}
\newtheorem{lemma}{Lemma}
\newtheorem{remark}{Remark}
\newtheorem{definition}{Definition}
\newtheorem{proposition}{Proposition}
\newtheorem{example}{Example}
\newtheorem{corollary}{Corollary}
\title{Asymptotics of running maxima for $\varphi$-subgaussian random double arrays}
\author{
Nour Al Hayek \textsuperscript{1}\thanks{ N. Al Hayek.  Email:nouralhayek@gmail.com} \
Illia Donhauzer\textsuperscript{2}\thanks{I. Donhauzer. Email:I.Donhauzer@latrobe.edu.au}   \
Rita Giuliano \textsuperscript{3}\thanks{R. Giuliano.  Email:rita.giuliano@unipi.it} \
Andriy Olenko\textsuperscript{2}\thanks{ A. Olenko. Email:a.olenko@latrobe.edu.au} \
Andrei Volodin \textsuperscript{1}\thanks{A. Volodin.  Email:andrei@uregina.ca}}
\affil{\textsuperscript{1} University of Regina, Regina, Canada

\textsuperscript{2} La Trobe University, Melbourne, Australia

\textsuperscript{3} Universit{\`a} di Pisa, Pisa, Italy}
\date{}
\begin{document}
\maketitle{}
\centerline{\it Dedicated to the memory of Professor Yuri Kozachenko (1940-2020)}

\begin{abstract}
The article studies the running maxima $Y_{m,j}=\max_{1 \le k \le m, 1 \le n \le j} X_{k,n} - a_{m,j}$ where $\{X_{k,n}, k \ge 1, n \ge 1\}$ is a double array of $\varphi$-subgaussian random variables and $\{a_{m,j}, m\ge 1, j\ge 1\}$ is a double array of constants.
Asymptotics of the maxima of the double arrays of positive and negative parts of $\{Y_{m,j}, m \ge 1, j \ge 1\}$ are studied, when $\{X_{k,n}, k \ge 1, n \ge 1\}$ have suitable ``exponential-type'' tail distributions. The main results are specified for various important particular scenarios and classes of $\varphi$-subgaussian random variables.
\end{abstract}

\noindent {\bf Keywords}: Random double array, running maxima, $\varphi$-subgaussian random variables, almost sure convergence.

\noindent {\bf AMS MSC 2000 Mathematics Subject Classification:} 60F15, 60G70, 60G60.

\section{Introduction}
The main focus of the this investigation is to obtain convergence theorems for the running maxima of $\varphi$-subgaussian random variables. The roots of the subject are in classical probability theory, and can be traced back to Gnedenko’s theory of limiting behaviour of maximas of random variables. We refer to the excellent book by Embrechts, Kl\"{u}ppelberg, and Mikosch \cite {EKM} that contains classical and more recent results on limit theorems of maximas of random variables with numerous examples of important practical applications in finance, economics, insurance and other fields.

Let $\{X_{k,n}, k \ge 1, n \ge 1\}$ be a double array of centered random variables (a 2D random field defined on the grid $\mathbb{N}\times\mathbb{N}$) that are not necessarily independent or identically distributed. We assume that these variables are defined on the same probability space $\big\{\Omega, \mathcal{F}, P\big\}.$

Studying properties of normalised maxima of random sequences and processes is one of the classical problems in probability theory that attracted considerable interest in the literature, see, for example, \cite{kra, lead, Pit, Tal} and the references therein. The known asymptotic results broadly belong to three classes that use different probabilistic tools to study properties of 
\begin{itemize}
	\item[(1)] expected maxima (see, for example, \cite{BM} and \cite{Tal}), 
		\item[(2)] convergence almost surely (see \cite{RLA} and \cite{Pi}), 	
		\item[(3)] asymptotic distributions of normalised maximas (see, for example, a comprehensive collection of results in \cite{Pit}).
		\end{itemize}  The case of Gaussian random variables has been extensively investigated for each of these classes.
However, there are still numerous open problems, in particular about an extension of the known results to non-Gaussian scenarios and multidimensional arrays.

This article studies sufficient conditions on the tail distributions of $X_{k,n}$ that guarantee the existence of such a sequence $\{a_{m,j}, m\ge 1, j\ge 1\}$ that the random variables
$$ Y_{m,j}=\max_{1 \le k \le m, 1 \le n \le j} X_{k,n} - a_{m,j}$$ converge to 0 almost surely as the number of random variables $X_{k,n}$ in the above maximum tends to infinity. This type of convergence is called the convergence of running maxima.

Contrary to the majority of classical results on the limiting behaviour of the maxima of random variables, where the convergence in distribution was considered (the third item above), we are interested in the almost surely convergence to zero.  First results of this type were obtained by Pickands \cite{Pi}, where the classical case of Gaussian random variables was considered. Later this result was generalized to wider classes of distributions. In \cite{Ri} running maxima of one-dimensional random sequences were considered and the generalization to the subgaussian case was studied. In \cite{RLA}, the results of \cite{Ri} were generalized to the case of $\varphi$-subgaussian random variables. For recent publications on this subject we refer to Giuliano and Macci \cite{GM}, Cs\'{a}ki and Gonchigdanzan~\cite{CG} and references therein.

The class of subgaussian and $\varphi$-subgaussian random variables is a natural extension of the Gaussian class. The popularity of the Gaussian distribution was justified by the central limit theorem for sums of random variables with small variances. However, asymptotics can be non-Gaussian if summands have large variances. Nevertheless, $\varphi$-subgaussianity still can be an appropriate assumption. Numerous probability distributions belong to the $\varphi$-subgaussian class. For example, reflected Weibull, centered bounded-supported, and sums of independent Gaussian and centered bounded random variables are in this class. \mbox{$\varphi$-subgaussian} random variables were introduced to generalize various properties of subgaussian class considered by Dudley \cite{dud0}, Fernique \cite{fern}, Kahane \cite{kah}, Ledoux and Talagrand \cite{led}. Then, several publications used this class of random variables to construct stochastic processes and fields, see \cite{KO2,KO,KOP}. The monograph \cite{BK} discusses subgaussianity and $\varphi$-subgaussianity in detail and provides numerous important examples and properties.

The main aim of this paper is to investigate the convergence of the running maxima of centered double arrays with more general exponential types of the tail distributions of $X_{k,n}$ than in \cite{RLA}. The integrability conditions on the subgaussian function $\varphi$ will obviously change.

The main results of the paper are Theorems~\ref{P1}--\ref{P4}. In these results the array $\{Y_{m,j}, m\ge 1, j\ge 1\}$ is split into two parts:
$$Y_{m,j}^{+} = \max(Y_{m,j}, 0), \ \ Y_{m,j}^{-} = \max(-Y_{m,j}, 0), \ m\ge 1, j\ge 1,$$
and the convergence of the arrays $\{Y_{m,j}^+, m\ge 1, j\ge 1\},$ $\{Y_{m,j}^-, m\ge 1, j\ge 1\}$ is investigated. The obtained results clearly show how the running maxima behaves depending on the right and left tail distributions of $X_{n,k}$. The dependence of the array $\{a_{m,j}, m\ge 1, j\ge 1\}$ on the function $\psi(\cdot)$ which is the Young-Fenchel transform of $\varphi$, and $\varphi$-subgaussian norm of $X_{k,n}$ is demonstrated. The paper also examines the rate of convergence of the positive parts array $\{Y_{m,j}^+, m\ge 1, j\ge 1\}.$

This paper investigates almost sure and $\lim(\max)$ convergence of random functionals of double arrays. More details about these and other types of convergence and their applications can be found in the publications \cite{DOA, HRVZ, HRV, Kl} and the references therein.

The novelty of the paper compared to the known in the literature results for one-dimensional sequences are:

- the case of random double arrays is studied,

- $\lim(\max)$ convergence is used,

- $\varphi$-subgaussian norms of random variables in the arrays can unboundedly increase,

- conditions on exponential-type bounded tails are weaker than in the literature,

- several assumptions are less restrictive than even in the known results for the one-dimensional case,

- specifications for various important cases and particular scenarious are provided.

This paper is organized as follows. Section~\ref{Prel} provides required definitions and notations. The main results of this article are proved in Section~\ref{mx} and ~\ref{conv}. Conditions on the convergence of running maxima are presented in Section~\ref{mx}. Estimates of the rate of convergence are given in Section ~\ref{conv}. Specifications of the main results and important particular cases are considered in Section \ref{sec5}. Section \ref{sec6} presents some simulation studies. Finally, conclusions and some problems for future investigations are given in the last section.

Throughout the paper, $u \vee v$ denotes $\max(u, v),$ $\mathbb{R}^+$ stands for the set of positive real numbers, and $C$ represents a generic finite positive constant, which is not necessarily same in each appearance.

All computations, plotting, and simulations in this article were performed using the software~R version~4.0.3. A reproducible version of the code in this paper is available in the folder \enquote{Research materials} from the website \url{https://protect-au.mimecast.com/s/w-hDCk8vzZULyROOuVK_Qw?domain=sites.google.com}.

\section{Definitions and auxiliary results}
\label{Prel}
This section presents definitions, notations, and technical results that will be used in the proofs of the main results later.

For double arrays of random variables, due to the lack of linear ordering of $\mathbb{N} \times \mathbb{N}$, there are multiple ways to define different modes of convergence. See the monograph~\cite{Kl} for a comprehensive discussion.

This paper considers $\lim(\max)$ convergence. Let $\{a_{m,j}, m \ge 1, j \ge 1\}$ be a double array of real numbers.

\begin{definition} The array $\{a_{m,j}, m \ge 1, j \ge 1\}$ converges to $a \in \mathbb{R}$ as $m \vee j \to \infty$ if for every $\varepsilon > 0$ there exists an integer $N$ such that if $m \vee j \ge N$ then
\[|a_{m,j}-a| < \varepsilon.\]
\end{definition}

In the following this convergence will be denoted by $\lim_{m \vee j \to \infty} a_{m,j} = a$ or by $a_{m,j} \to a$ as $m \vee j \to + \infty$.

This paper uses the next notations of $\varphi$-subgaussianity.

\begin{definition} A continuous function $\varphi(x), x\in {\mathbb{R}}$, is called an Orlicz $N$-{function} if

a) it is even and convex,

b) $\varphi(0)=0$,

c) $\varphi(x)$  is a monotone increasing function for $x>0$,

d) $\lim\limits_{x\to 0} \frac{\varphi(x)}{x} =0$ and $\lim\limits_{x\to+\infty} \frac{\varphi(x)}{x} =+\infty$.

\end{definition}
In the following the notation $\varphi(x)$ is used for an Orlicz $N$-function.

\begin{example} 
{\rm The function $\varphi(x)=\frac{|x|^r}{r}, \ r> 1,$ is an Orlicz $N$-function.}
\end{example}

\begin{definition} A function $\psi(x), \ x\in {\mathbb{R}}$, given by $\psi(x):=\sup_{y\in {\mathbb{R}}} \left(xy - \varphi(y) \right)$ is called the Young-Fenchel transform of $\varphi(x)$.
\end{definition}
It is well-known that $\psi(\cdot)$ is an Orlicz $N$-function.

\begin{example}
{\rm If $\varphi(x)=\frac{|x|^r}{r}, \ r>1$, then $\psi(x)=\frac{|x|^q}{q},$ where $\frac{1}{r} + \frac{1}{q} =1$.}
\end{example}
Any Orlicz $N$-function $\varphi(x)$ can be represented in the integral form
\[\varphi(x)= \int_0^{|x|} p_{\varphi}(t)\ dt\]
where $p_{\varphi}(t)$, $t\geq0,$ is its density. The density $p_{\varphi}(\cdot)$ is non-decreasing and there exists a generalized inverse $q_\varphi(\cdot)$ defined by
\[q_{\varphi}(t):= \sup\{ u\ge 0: p_{\varphi}(u)\le t\}.\]
Then,
\[\psi(x)=  \int_0^{|x|} q_{\varphi} (t)\ dt.\]
As a consequence, the function $\psi(\cdot)$ is increasing, differentiable, and $\psi'(\cdot)=q_{\varphi}(\cdot)$.

\begin{definition}\label{def4} A random variable $X$ is $\varphi$-subgaussian if $E(X)=0$ and there exists a finite constant $a> 0$ such that  $E\exp\left(t X\right) \le \exp\left(\varphi(at)\right)$ for
all $t\in {\bf \mathbb{R}}$.
The $\varphi$-subgaussian norm $\tau_{\varphi}(X)$ is defined as
$$ \tau_{\varphi}(X):=\inf\{a>0: E\exp\left(t X\right) \le \exp\left(\varphi(at)\right), t\in {\mathbb{R}} \}. $$
\end{definition}

The definition of a $\varphi$-subgaussian random variable is given in terms of expectations, but it is essentially a condition on the tail of the distribution. Namely, the following result holds, see \cite[Lemma 4.3, p.~66]{BK}.

\begin{lemma}
\label{l1}
If $\varphi(\cdot)$ is an Orlicz $N$-function and a random variable $X$ is $\varphi$-subgaus\-sian, then for all $x> 0$ the following inequality holds
\[P\big(X \geq x\big) \le \exp\left(-\psi\left(\frac{x}{\tau_{\varphi}(X)}\right)\right).\]
\end{lemma}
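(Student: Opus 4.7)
The plan is to carry out a classical Chernoff--Cram\'er exponential Markov argument and recognise the resulting optimal exponent as the Young--Fenchel transform $\psi$.

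First I would fix $x>0$ and pick any $a > \tau_{\varphi}(X)$. By Definition~\ref{def4}, $E\exp(tX) \leq \exp(\varphi(at))$ for every $t\in\mathbb{R}$. Applying Markov's inequality to the non-negative variable $e^{tX}$ for an arbitrary $t>0$ gives
$$P(X\geq x) = P\bigl(e^{tX}\geq e^{tx}\bigr) \leq e^{-tx}\,E[e^{tX}] \leq \exp\bigl(\varphi(at)-tx\bigr).$$

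Next, I would optimise the right-hand side over $t>0$. The substitution $y=at$ rewrites the exponent as $-(xy/a-\varphi(y))$, so
$$P(X\geq x) \leq \exp\left(-\sup_{y>0}\left(\frac{x}{a}\,y-\varphi(y)\right)\right).$$
Because $\varphi$ is even with $\varphi(0)=0$ and is non-decreasing on $[0,\infty)$, for $x/a>0$ the expression $(x/a)y-\varphi(y)$ is non-positive at $y=0$ and strictly smaller for $y<0$ than at $-y>0$; hence the supremum over $y>0$ equals the supremum over all of $\mathbb{R}$, which is exactly $\psi(x/a)$ by definition of the Young--Fenchel transform. This yields $P(X\geq x)\leq \exp(-\psi(x/a))$.

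Finally, I would let $a\downarrow \tau_{\varphi}(X)$. Since $\psi$ is continuous (as a convex $N$-function on $\mathbb{R}$) and increasing on $[0,\infty)$, the inequality passes to the limit, delivering the claimed bound. The only mildly delicate point, and the main thing to be explicit about, is that the infimum defining $\tau_{\varphi}(X)$ need not a priori be attained; working with $a>\tau_{\varphi}(X)$ and then taking the limit sidesteps this issue cleanly and avoids any separate closedness argument for the set appearing in the definition of the $\varphi$-subgaussian norm.
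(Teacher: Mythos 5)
Your proof is correct: the exponential Markov (Chernoff) bound $P(X\geq x)\leq \exp(\varphi(at)-tx)$, the optimisation over $t>0$ recognised as $\psi(x/a)$ (with the observation that the supremum over $y>0$ coincides with the supremum over all of $\mathbb{R}$), and the limit $a\downarrow \tau_{\varphi}(X)$ to handle the possibly unattained infimum are all sound. The paper does not prove this lemma itself but cites \cite[Lemma 4.3, p.~66]{BK}, whose proof is precisely this standard argument, so your approach matches the source.
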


\begin{remark} We refer to the monograph {\rm\cite{BK}} where the notion of $\varphi$-subgaussianity was introduced and discussed in detail. Various examples were also provided in {\rm\cite{BK}}. In the case $\varphi(x)=\frac{x^2}{2}$ the notion of $\varphi$-subgaussianity reduces to the classical  subgaussianity (see, for example,  {\rm\cite[Section {\rm 4.29}]{HJ}}).
\end{remark}
For readers' convenience, we present a brief discussion of recent relevant results in the literature on one-dimensional sequences of $\varphi$-subgaussian random variables.

Consider a zero-mean sequence $\{X_k, k \ge 1\}$ of random variables, and set
\[Y_n=\max_{1\le k \le n} X_k - \sqrt{2\ln n}.\]
If $X_k$ are independent and Gaussian random variables, then $\lim_{n\to \infty} Y_n =0$ a.s., see, for instance, \cite{Pi}.

In \cite{RLA} the following proposition was proved for $Y_n^{+} = \max(Y_n, 0)$.

\begin{proposition}
\label{A} Suppose that there exists $\varepsilon_0 >0$ such that for every $\varepsilon \le \varepsilon_0$, the generalized inverse of the density $p_{\varphi}(\cdot)$ of the Orlicz $N$-function $\varphi(\cdot)$ satisfies the conditions
\[\int_{0}^{\infty}q_{\varphi}(x) \exp\big(-\varepsilon q_{\varphi}(x)\big) \ dx <+\infty \]
and
\[\sup_{k\ge 1} \tau_{\varphi}(X_k) = C \le 1.\]
Then  $\lim_{n\to\infty} Y_n^+ =0$ a.s.
\end{proposition}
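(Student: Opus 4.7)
The plan is to apply the Borel--Cantelli lemma along a geometric subsequence. Fix $\delta > 0$ and aim to show $P(Y_n^+ > \delta \text{ infinitely often}) = 0$; running this argument for a countable sequence $\delta_m \downarrow 0$ on a common event of full probability then yields $\lim_{n\to\infty} Y_n^+ = 0$ almost surely.

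The first step is a dyadic block decomposition. With $n_k = 2^k$, for $n \in [n_k, n_{k+1})$ the monotonicity of $n \mapsto \max_{j\le n} X_j$ and of $n \mapsto \sqrt{2\ln n}$ gives
\[
Y_n^+ \le \Big(\max_{1 \le j \le n_{k+1}} X_j - \sqrt{2\ln n_k}\Big)^+.
\]
Next, a union bound combined with Lemma~\ref{l1}, together with the assumption $\tau_\varphi(X_j) \le C \le 1$ and the monotonicity of $\psi$, yields
\[
P\Big(\sup_{n_k \le n < n_{k+1}} Y_n^+ > \delta\Big) \le n_{k+1}\, \exp\big(-\psi(\sqrt{2\ln n_k} + \delta)\big),
\]
so the task reduces to showing that $\sum_{k\ge 1} n_{k+1}\exp(-\psi(\sqrt{2\ln n_k} + \delta))$ is finite.

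The main obstacle is extracting this summability from the integral hypothesis. I would exploit the convexity of $\psi$, specifically $\psi(x + \delta) \ge \psi(x) + \delta q_\varphi(x)$, to peel off the shift $\delta$ as a factor $\exp(-\delta q_\varphi(\sqrt{2\ln n_k}))$. An integral-test comparison together with the substitution $x = \sqrt{2\ln t}$ (so that $\ln t = x^2/2$ and $dt = t x\, dx$) should then rewrite the residual series as an integral involving $q_\varphi$, comparable after one further change of variable to the hypothesised
\[
\int_0^\infty q_\varphi(x) \exp(-\varepsilon q_\varphi(x))\,dx < \infty
\]
for an $\varepsilon \in (0,\varepsilon_0]$ chosen suitably small relative to $\delta$. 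Matching the prefactor produced by these substitutions against $q_\varphi(x)$, and absorbing any slowly varying corrections by shrinking $\varepsilon$, is the delicate technical step, and it is here that the hypothesis is needed for every sufficiently small $\varepsilon$ rather than for a single value.

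Once this summability is established, Borel--Cantelli gives $\limsup_n Y_n^+ \le \delta$ almost surely, and letting $\delta \downarrow 0$ along a countable sequence on a single event of full probability completes the proof.
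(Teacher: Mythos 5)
Your route is genuinely different from the paper's. The paper does not reprove Proposition~\ref{A} directly (it is quoted from the literature), but its double-array extension, Theorem~\ref{P1}, is proved in full, and the comparison is with that argument. The paper's key device is Lemma~\ref{l2}: the events $\{Y_{n}^+>\varepsilon \mbox{ i.o.}\}$ and $\{Z_{n}^+>\varepsilon \mbox{ i.o.}\}$ coincide, where $Z_n=X_n-a_n$ is a \emph{single} term. This removes the maximum entirely, so Borel--Cantelli is applied to $\sum_n P(X_n>a_n+\varepsilon)$ with no union bound and no blocking; the mean value theorem (in the form $\psi(t+\varepsilon)-\psi(t)\ge\varepsilon q_\varphi(t)$, since $\psi'=q_\varphi$ is non-decreasing) and the substitution $t=\psi^{-1}(\ln x)$ then turn that series directly into $\int_0^\infty q_\varphi(t)e^{-\varepsilon q_\varphi(t)}\,dt$. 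Your dyadic blocking plus union bound pays a factor $n_{k+1}$ but recovers $e^{-\psi(a_{n_k})}=n_k^{-1}$ from Lemma~\ref{l1}, so the two cancel up to the constant $2$, and the residual series $\sum_k\exp\bigl(-\delta q_\varphi(a_{n_k})\bigr)=\sum_k\exp\bigl(-\delta q_\varphi(\psi^{-1}(k\ln 2))\bigr)$ is dominated, by monotonicity of $q_\varphi\circ\psi^{-1}$, by $\frac{1}{\ln 2}\int_0^\infty\exp\bigl(-\delta q_\varphi(\psi^{-1}(y))\bigr)\,dy$, which is exactly the hypothesis after the change of variable $y=\psi(x)$. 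So your ``delicate technical step'' does close, with no shrinking of $\varepsilon$ beyond taking $\delta\le\varepsilon_0$. The paper's route is shorter and is the one that survives the passage to double arrays, where there is no linear order along which to block; yours avoids needing the exact i.o.\ identity of Lemma~\ref{l2}.

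One caveat: the cancellation above requires $a_n=\psi^{-1}(\ln n)$, which equals $\sqrt{2\ln n}$ only in the Gaussian case $\psi(x)=x^2/2$. If you keep $\sqrt{2\ln n}$ literally for a general Orlicz $N$-function, then $n_{k+1}\exp\bigl(-\psi(\sqrt{2\ln n_k}+\delta)\bigr)$ need not be summable (try $\psi(x)=x^{3/2}$), and indeed the statement itself fails with that centering; concretely, your substitution $x=\sqrt{2\ln t}$ produces the prefactor $xe^{x^2/2}$, which cannot be absorbed into $q_\varphi(x)e^{-\varepsilon q_\varphi(x)}$ without the extra assumption $\psi(x)\ge x^2/2$. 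Read the normalizer as $\psi^{-1}(\ln n)$, as in Corollary~\ref{rc}, and your argument is correct.
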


It is natural to try to extend Proposition \ref{A} to the multidimensional arrays. This is done in the next section.

Next, the behaviour of $Y_n^- =  \max(-Y_n, 0),\ n\ge 1,$ was also studied in \cite{RLA}, but some additional assumptions on the left tail distribution of $X_n$ were required. Unfortunately, these assumptions cannot be derived from the $\varphi$-subgaussianity assumption (see Remark 2 in \cite{RLA}). In contrast to  Proposition \ref{A}, the independence assumption is also required.

\begin{proposition}
\label{B} Assume that $\{X_k, k\ge 1\}$ is a sequence of zero-mean independent random variables and there exists a number $C>0$ such that, for every $k\ge 1$ and all $x>0$, we have
\[P\big(X_k < x\big) \leq \exp\big(-Ce^{-\psi(x)}\big),\]
where $\psi(\cdot)$ is a positive differentiable function with $q(x) = \psi'(x)$ non-decreasing for $x>0$.
Suppose that there exists an $\varepsilon_0 >0$ such that for every $\varepsilon \le \varepsilon_0$ it holds
\[\int_0^{+\infty}\exp\big(\psi(x)-Ce^{\varepsilon q(x-\varepsilon)}\big) q(x) dx<+\infty.\]
Then $\lim_{n\to\infty} Y_n^- =0$ a.s.
\end{proposition}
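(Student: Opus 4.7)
The plan is to use the Borel--Cantelli lemma: for every $\varepsilon > 0$ we aim to show $\sum_{n\ge 1} P(Y_n^- > \varepsilon) < \infty$, from which $Y_n^- \le \varepsilon$ eventually almost surely, and letting $\varepsilon$ run through a countable null sequence yields $Y_n^-\to 0$ a.s. The event $\{Y_n^- > \varepsilon\}$ equals $\{\max_{1\le k \le n} X_k < a_n - \varepsilon\}$, where $a_n$ denotes the normalising sequence $\sqrt{2\ln n}$ (which in the $\varphi$-subgaussian generalisation is more naturally the quantity determined by $\psi(a_n)=\ln n$, so that the key identity $ne^{-\psi(a_n)}=1$ holds; the argument below is written in this latter form).

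First I would use independence to factor, then apply the left-tail hypothesis:
\[ P(Y_n^- > \varepsilon) = \prod_{k=1}^n P(X_k < a_n - \varepsilon) \le \exp\bigl(-nCe^{-\psi(a_n - \varepsilon)}\bigr). \]
Next, I would exploit the mean value theorem and the monotonicity of $q=\psi'$ to obtain $\psi(a_n) - \psi(a_n - \varepsilon) \ge \varepsilon\, q(a_n - \varepsilon)$. Combined with $n=e^{\psi(a_n)}$, this upgrades to $n e^{-\psi(a_n-\varepsilon)} \ge e^{\varepsilon q(a_n-\varepsilon)}$, and therefore
\[ P(Y_n^- > \varepsilon) \le \exp\bigl(-C e^{\varepsilon q(a_n - \varepsilon)}\bigr). \]

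The concluding step is an integral--series comparison. Since $q$ is non-decreasing and $t\mapsto a_t$ is increasing, the bound is monotone in $n$ once $a_n>\varepsilon$, so by the integral test the tail of the series is dominated by $\int^\infty \exp(-Ce^{\varepsilon q(a_t-\varepsilon)})\,dt$. Performing the change of variables $x=a_t$, equivalently $t=e^{\psi(x)}$ with $dt = q(x)e^{\psi(x)}\,dx$, transforms this into
\[ \int_{x_0}^{\infty} \exp\bigl(\psi(x) - C e^{\varepsilon q(x - \varepsilon)}\bigr) q(x)\, dx, \]
which is finite exactly by the stated hypothesis. Adding the trivial contribution of the finitely many indices with $a_n\le\varepsilon$ completes the estimate.

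The delicate point should be this last change of variables: it depends on the precise choice of normalisation so that $n e^{-\psi(a_n)}=1$, and only then does the unusual factor $\exp(\psi(x))q(x)$ in the assumed integrand acquire a natural meaning as the Jacobian of the substitution. I expect this (rather than Borel--Cantelli or the MVT step) to be the main technical obstacle. It is also worth observing that the independence assumption is essential for factoring $P(\max_k X_k < a_n-\varepsilon)$ into a product; this explains why Proposition~\ref{B} demands it while Proposition~\ref{A} does not.
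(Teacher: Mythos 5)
Your argument is correct and matches the paper's approach: Proposition~\ref{B} is quoted there without proof (from \cite{RLA}), but the paper's proof of its double-array generalization, Theorem~\ref{P2}, follows exactly your scheme --- Borel--Cantelli after factoring the probability into a product by independence, the mean value theorem combined with the monotonicity of $q$, and an integral--series comparison under the substitution $x=\psi^{-1}(\ln t)$, whose Jacobian $q(x)e^{\psi(x)}$ is precisely what turns the bound into the hypothesised integrand. Your identification of the normalisation $a_n=\psi^{-1}(\ln n)$ (so that $ne^{-\psi(a_n)}=1$) as the crux is consistent with the paper, which uses $a_{m,j}=g(\ln(mj))\psi^{-1}(\ln(mj))$ in the two-index setting.
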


In the exponential-type tail condition Proposition \ref{B} uses the same function $\psi(\cdot)$ as in the definition of $\varphi$-subgaussianity of $X_k.$ The next section will extend it to the case of arbitrary functions.

\begin{proposition}
\label{C} Let $\{X_k, k \geq 1\}$ be a sequence of $\varphi$-subgaussian random
variables such that $\sup_{k\ge 1} \tau_{\varphi}(X_k) = c$ and let $\alpha> 2- \frac{1}{c}$. Then
\[\sum_{k=1}^{+ \infty} k^{- \alpha} P\big(Y_k^+ >0\big) < + \infty. \]
\end{proposition}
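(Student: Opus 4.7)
The plan is to reduce $P(Y_k^+ > 0)$ to individual tail estimates via a union bound, apply Lemma~\ref{l1}, and then check summability of the resulting series against $k^{-\alpha}$.

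First, since $\{Y_k^+ > 0\} = \{Y_k > 0\} = \{\max_{1 \le i \le k} X_i > \sqrt{2\ln k}\}$, a union bound yields
\[
P(Y_k^+ > 0) \;\le\; \sum_{i=1}^k P\bigl(X_i > \sqrt{2\ln k}\bigr).
\]
Each summand is at most $\exp\!\bigl(-\psi(\sqrt{2\ln k}/c)\bigr)$ by Lemma~\ref{l1}, combined with $\tau_\varphi(X_i) \le c$ and monotonicity of $\psi$ on $\mathbb{R}^+$, whence
\[
P(Y_k^+ > 0) \;\le\; k \exp\!\Bigl(-\psi\!\bigl(\tfrac{\sqrt{2\ln k}}{c}\bigr)\Bigr).
\]

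The crux is to convert this into a polynomial bound of the form $k^{1 - 1/c}$, i.e.\ to show $\psi(\sqrt{2\ln k}/c) \ge (\ln k)/c$ for $k$ large. To this end I would exploit two analytic properties of the Young--Fenchel transform. First, $\psi$ is convex with $\psi(0) = 0$, so $y \mapsto \psi(y)/y$ is nondecreasing on $(0, \infty)$; this yields $\psi(y/c) \ge \psi(y)/c$ for $c \le 1$, and an analogous scaling in the complementary regime. Second, Young's inequality $\psi(y) \ge xy - \varphi(x)$, applied with a judicious choice such as $x = y = \sqrt{2\ln k}$, transfers control of $\varphi$ at argument $\sqrt{2\ln k}$ into a lower bound on $\psi(\sqrt{2\ln k})$. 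The main obstacle is precisely this step: the $\varphi$-subgaussian hypothesis controls $\psi$ only implicitly through conjugacy with $\varphi$, and squeezing out the exponent $1/c$ rather than something weaker requires a careful tuning of parameters in the Young inequality together with the scaling bound on $\psi(\cdot/c)$ coming from convexity.

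Once the estimate $P(Y_k^+ > 0) \le C \, k^{1 - 1/c}$ is in hand, the conclusion is immediate:
\[
\sum_{k=1}^{\infty} k^{-\alpha} P(Y_k^+ > 0) \;\le\; C \sum_{k=1}^{\infty} k^{\,1 - \alpha - 1/c},
\]
and this last series is finite precisely when $1 - \alpha - 1/c < -1$, i.e.\ $\alpha > 2 - 1/c$, which is the stated hypothesis.
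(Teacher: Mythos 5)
Your overall skeleton---union bound over the $k$ individual variables, Lemma~\ref{l1} for each tail, a polynomial bound $P(Y_k^+>0)\le Ck^{1-1/c}$, and comparison with $\sum k^{1-\alpha-1/c}$---is exactly the route the paper takes in its two-dimensional analogue (the proof of Theorem~\ref{P3} with $g\equiv 1$, $f\equiv 1/c$). But the step you yourself flag as ``the main obstacle'' is a genuine gap, and the repair you sketch cannot work. You take the centering literally as $a_k=\sqrt{2\ln k}$ and then try to force $\psi(\sqrt{2\ln k})\ge \ln k$ out of Young's inequality with $x=y=\sqrt{2\ln k}$; that inequality is equivalent to $\psi(x)\ge x^2/2$, which is false for a general Orlicz $N$-function (take $\psi(x)=|x|^q/q$ with $1<q<2$: then $\psi(\sqrt{2\ln k})\asymp(\ln k)^{q/2}=o(\ln k)$, and no tuning of the Young inequality, which only gives $\psi(y)\ge xy-\varphi(x)$ with $\varphi$ the conjugate of that same $\psi$, can recover a bound that is simply not true). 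With the centering $\sqrt{2\ln k}$ the proposition itself fails for such $\psi$, so the obstacle is not a technicality.

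The resolution is that for general $\varphi$ the centering in this proposition is $a_k=\psi^{-1}(\ln k)$ (the display $Y_n=\max_{1\le i\le n}X_i-\sqrt{2\ln n}$ is the Gaussian specialization $\psi(x)=x^2/2$; compare $a_{m,j}=g(\ln(mj))\psi^{-1}(\ln(mj))$ in Theorems~\ref{P1} and~\ref{P3}). With that choice no Young-inequality gymnastics are needed: the convexity fact you already invoke, namely that $y\mapsto\psi(y)/y$ is nondecreasing, i.e.\ $\psi(\theta x)\ge\theta\psi(x)$ for $\theta\ge 1$, applied with $x=\psi^{-1}(\ln k)$ and $\theta=1/c\ge 1$ gives
\[
\psi\!\left(\frac{\psi^{-1}(\ln k)}{c}\right)\;\ge\;\frac{1}{c}\,\psi\bigl(\psi^{-1}(\ln k)\bigr)\;=\;\frac{\ln k}{c},
\]
hence $P(Y_k^+>0)\le k\cdot k^{-1/c}$ and the series converges for $\alpha>2-1/c$. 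Note this scaling inequality requires $\theta=1/c\ge 1$; for $c>1$ it reverses, but then $2-1/c\ge 1$ forces $\alpha>1$ and the sum converges trivially since $P(Y_k^+>0)\le 1$ (this is the content of the remark following the proposition), so the only case needing the argument is $c\le 1$.
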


\begin{remark}
The statement of Proposition {\rm\ref{C}} is obvious for $\alpha >1$. Hence, only the case of $\alpha< 1$ i.e. $c<1,$ is interesting.
\end{remark}

Note that \cite{RLA} also examined the rate of convergence of the sequence $\{Y_n^{+}, n\ge 1\}$. In Proposition \ref{C}, the rate of convergence for  $P(Y_n^+ >0)$ is given, while usually only results for $ P\big(Y_n^+ > \varepsilon\big)$, $\varepsilon> 0,$ were obtained in the existing literature. As $ P\big(Y_n^+ > \varepsilon\big)\le P\big(Y_n^+ > 0\big)$ for any  $\varepsilon> 0$ it also follows from the assumptions of the proposition that $\sum_{k=1}^{+ \infty} k^{- \alpha} P\big(Y_k^+ >\varepsilon\big) < + \infty$.

It was also shown in \cite{RLA} that, Proposition \ref{C} is sharp in some sense. Namely, the following result is true.

\begin{proposition}
\label{D} Let $\{X_k, k \geq 1\}$ be a zero-mean sequence of independent random variables and there exists a strictly increasing differentiable function $\psi:{\mathbb R}^+ \to {\mathbb R}^+$ and $t_0 > 0$ such that, for every $k \geq 1$ and $t > t_0$ it holds
\[P\big(X_k > t\big) \geq \exp\big(- \psi(t)\big).\]
Assume that there exists $\varepsilon_0>0$ such that
\[\limsup _{x \to + \infty} \frac{\max_{x\le \xi\le x+\varepsilon_0} \psi'(\xi)}{\psi(x)}=l <\infty. \]
Then, for every real number $\alpha < 1$ and for every $0 < \varepsilon < (1-\alpha)/l$ it holds
\[\sum_{k=1}^{\infty} k^{- \alpha} P\big(Y_k^+ >\varepsilon\big) = + \infty. \]
\end{proposition}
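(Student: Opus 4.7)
The plan is to convert the hypothesised pointwise lower tail bound on the $X_k$ into an explicit polynomial lower bound of the form $P(Y_k^{+}>\varepsilon)\gtrsim k^{-\varepsilon(l+\delta)}$ for every $\delta>0$ and all large $k$, and then deduce divergence of the weighted series from the strict inequality $\varepsilon<(1-\alpha)/l$.

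Start from $Y_k^{+}\ge Y_k$, giving
\[
P(Y_k^{+}>\varepsilon)\ \ge\ P\!\left(\max_{1\le i\le k}X_i>a_k+\varepsilon\right),
\]
where $a_k$ denotes the normalising sequence appearing in the definition of $Y_k$. Independence of the $X_i$ together with the hypothesised lower tail bound (valid once $a_k+\varepsilon>t_0$, i.e.\ for all large $k$) yield
\[
\prod_{i=1}^{k}P(X_i\le a_k+\varepsilon)\ \le\ \bigl(1-e^{-\psi(a_k+\varepsilon)}\bigr)^{k}\ \le\ \exp\!\bigl(-k\,e^{-\psi(a_k+\varepsilon)}\bigr),
\]
via $1-y\le e^{-y}$.

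The crux of the argument is to use the limsup hypothesis to control $\psi(a_k+\varepsilon)$ multiplicatively by $\psi(a_k)$. Fix any $\delta>0$; the definition of $\limsup$ furnishes $x_{0}=x_{0}(\delta)$ such that $\max_{x\le\xi\le x+\varepsilon_{0}}\psi'(\xi)\le(l+\delta)\psi(x)$ for all $x\ge x_{0}$. For $\varepsilon\le\varepsilon_{0}$ and $k$ large enough that $a_{k}\ge x_{0}$, the mean value theorem gives $\psi(a_k+\varepsilon)\le\bigl(1+\varepsilon(l+\delta)\bigr)\psi(a_k)$. Since the relevant normalising sequence satisfies $\psi(a_k)\sim\ln k$, this produces $k\,e^{-\psi(a_k+\varepsilon)}\gtrsim k^{-\varepsilon(l+\delta)}\to 0$, and the elementary inequality $1-e^{-x}\ge x/2$ for small positive $x$ then yields $P(Y_k^{+}>\varepsilon)\ge\tfrac{1}{2}\,k^{-\varepsilon(l+\delta)}$ eventually. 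Summation therefore produces $\sum_{k}k^{-\alpha}P(Y_k^{+}>\varepsilon)\gtrsim\sum_{k}k^{-\alpha-\varepsilon(l+\delta)}$, which diverges as soon as $\alpha+\varepsilon(l+\delta)\le 1$; the hypothesis $\varepsilon<(1-\alpha)/l$ guarantees that such a $\delta>0$ exists.

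The main obstacle is the passage from the limsup control of $\psi'/\psi$ to a pointwise multiplicative estimate on $\psi(a_k+\varepsilon)$. The ``max'' form of the assumption is essential (supplying uniformity over $[a_k,a_k+\varepsilon]$) and forces the restriction $\varepsilon\le\varepsilon_{0}$. To cover the range $\varepsilon_{0}\le\varepsilon<(1-\alpha)/l$ in the case $(1-\alpha)/l>\varepsilon_{0}$, one has to iterate the one-step estimate along a partition of $[a_k,a_k+\varepsilon]$ into sub-intervals of length at most $\varepsilon_{0}$; this preserves the polynomial order of the resulting lower bound and only alters the multiplicative constants.
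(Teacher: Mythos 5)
Your core argument is correct and is essentially the scheme the paper itself uses for the double-array analogue of this statement (Theorem~4): note the paper does not reprove Proposition~4 but cites it from the literature, and its own Theorem~4 proof proceeds exactly as you do --- independence gives the product, $1-t\le e^{-t}$ gives $1-\exp\bigl(-k\,e^{-\psi(a_k+\varepsilon)}\bigr)$, and a bound on the ratio $\psi'(\,\cdot\,+\varepsilon)/\psi(\cdot)$ (packaged there as assumption~(iii) with $q_\varphi=\psi'$) converts $\psi(a_k+\varepsilon)$ into $(1+\varepsilon(l+\delta))\ln k$, yielding a polynomial lower bound $k^{-\varepsilon(l+\delta)}$ and divergence. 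The only cosmetic difference is that the paper uses $1-e^{-t}\ge t e^{-t}$ where you use $1-e^{-t}\ge t/2$ for small $t$; both are fine. Your $\delta$-extraction from the $\limsup$ and the final choice of $\delta$ from $\varepsilon l<1-\alpha$ are clean.

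The genuine problem is your closing paragraph. Iterating the mean-value estimate over subintervals of length $\varepsilon_0$ does \emph{not} ``preserve the polynomial order and only alter multiplicative constants'': each step multiplies the bound on $\psi$ by a factor $1+\varepsilon_0(l+\delta)$, so after $N=\lceil\varepsilon/\varepsilon_0\rceil$ steps you obtain $\psi(a_k+\varepsilon)\le(1+\varepsilon_0(l+\delta))^{N}\ln k$, and $(1+\varepsilon_0(l+\delta))^{N}>1+\varepsilon(l+\delta)$ already for $N=2$. The resulting lower bound $k^{\,1-(1+\varepsilon_0(l+\delta))^{N}}$ is of strictly smaller polynomial order and no longer forces divergence up to the threshold $(1-\alpha)/l$. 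This is not a fixable defect of the iteration: taking $\psi(x)=e^{x}$ (so $l=e^{\varepsilon_0}$) and the extremal distribution $P(X_k>t)=e^{-\psi(t)}$, one computes $P(Y_k^{+}>\varepsilon)\sim k^{1-e^{\varepsilon}}$, and $e^{\varepsilon}-1$ genuinely exceeds $\varepsilon l$ once $\varepsilon>\varepsilon_0$, so the series can converge for $\varepsilon\in(\varepsilon_0,(1-\alpha)/l)$. In other words, the one-step mean-value bound is the sharp one, and the conclusion should be read as holding for $\varepsilon\le\varepsilon_0$ (which is exactly how the paper's Theorem~4 is phrased: its assumption~(iii) and its conclusion refer to the \emph{same} $\varepsilon$). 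Restrict your statement to $\varepsilon\le\varepsilon_0$ and delete the iteration claim; everything else stands.
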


\section{On asymptotic behaviour of running maxima of $\varphi$-subgaussian double arrays}
\label{mx}
In this section, we establish sufficient conditions on the tail distributions of $X_{k,n}$ that guarantee that positive and negative parts of random variables $Y_{m,j}$ converge to 0 almost surely as $m \vee j \to \infty.$

Let $\varphi(\cdot)$ be an Orlicz $N$-function, $p_{\varphi}(\cdot)$ be its density, the function $\psi(\cdot)$ is the Young-Fenchel transform of $\varphi(\cdot)$, and the function $q_{\varphi}(\cdot)$ be the generalized inverse of the density $p_{\varphi}(\cdot)$.

Let us consider a double array (2D random field defined on the integer grid $\mathbb{N}\times\mathbb{N}$) of zero-mean random variables $\{X_{k,n}, k \ge 1, n \ge 1\}$. The next notations will be used to formulate the main results
\begin{eqnarray*}
Y_{m,j}: &=& \max_{1 \le k \le m, 1 \le n \le j} X_{k,n} -a_{m,j}, \\
Z_{m,j}: &=& X_{m,j}-a_{m,j},
\end{eqnarray*} where $a_{m,j}$ is an increasing function with respect to each of $m$ and $j$ variables, where  $m,j\ge 1.$

Let
\[Y^{+}_{m,j}:=\max(Y_{m,j},0) \ \ {\rm{and}} \ \ Y^{-}_{m,j}:=\max(-Y_{m,j},0).\]

Indices $m$ and $j$ of the random variables $Y_{m,j}$ can be viewed as the parameters defining the rectangular observation window $\{(k, n) : k\leq m,n\leq j, \ k,n \in\mathbb{N}\}$ of the random field $X_{k,n}$ on $\mathbb{N}\times\mathbb{N}$.

The following proofs will use the next extension of Lemma 2 from \cite{RLA} to the case of double arrays.

\begin{lemma}
\label{l2}
For any $\varepsilon> 0$
\[\{\omega \in \Omega : Y^+_{m,j} >\varepsilon  \mbox{ i.o.} \} =\{\omega \in \Omega : Z_{m,j}^+ >\varepsilon \mbox{ i.o.}\},\] where i.o. stands for infinitely often.
\end{lemma}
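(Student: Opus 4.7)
The lemma asserts that the diagonal array $Z^+_{m,j}$ has the same limsup behaviour as the running maxima array $Y^+_{m,j}$, so the plan is to prove equality of the two events by establishing set inclusion in both directions. The inclusion $\{Z^+_{m,j}>\varepsilon \text{ i.o.}\} \subseteq \{Y^+_{m,j}>\varepsilon \text{ i.o.}\}$ is immediate from a pointwise comparison: the single term $X_{m,j}$ appears in the maximum defining $Y_{m,j}$, hence $Y_{m,j} \ge X_{m,j}-a_{m,j}=Z_{m,j}$, and therefore $Y^+_{m,j} \ge Z^+_{m,j}$. Any pair $(m,j)$ that witnesses $Z^+_{m,j}(\omega) > \varepsilon$ then also witnesses $Y^+_{m,j}(\omega) > \varepsilon$.

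For the reverse inclusion I would fix $\omega$ with $Y^+_{m,j}(\omega) > \varepsilon$ i.o.\ and, for each integer $N \ge 1$, pick indices $(m_N, j_N)$ with $m_N \vee j_N \ge N$ and $Y_{m_N, j_N}(\omega) > \varepsilon$. Since the maximum in the definition of $Y$ is attained on the finite rectangle, there exist $(k_N, n_N)$ with $k_N \le m_N$, $n_N \le j_N$, and $X_{k_N, n_N}(\omega) - a_{m_N, j_N} > \varepsilon$. The hypothesis that $a_{m,j}$ is increasing in each of $m$ and $j$ separately, together with $k_N \le m_N$ and $n_N \le j_N$, yields $a_{k_N, n_N} \le a_{m_N, j_N}$, so
\[
Z_{k_N, n_N}(\omega) = X_{k_N, n_N}(\omega) - a_{k_N, n_N} \ge X_{k_N, n_N}(\omega) - a_{m_N, j_N} > \varepsilon.
\]

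The step I expect to require the most care is to verify that the witnessing indices $(k_N, n_N)$ can be arranged so that $k_N \vee n_N \to \infty$, so that they actually belong to the limsup event for $Z^+$ in the $m \vee j \to \infty$ sense defined in Section~\ref{Prel}. If they did not, they would lie in a finite set of pairs, and the pigeonhole principle would produce a fixed pair $(k^{\ast}, n^{\ast})$ with $X_{k^{\ast}, n^{\ast}}(\omega) > a_{m_N, j_N} + \varepsilon$ for infinitely many $N$; this contradicts $a_{m_N, j_N} \to \infty$ as $m_N \vee j_N \to \infty$, the growth of the normalising array implicit in the setting (and explicit in the applications, where $a_{m,j}$ is essentially of order $\psi^{-1}(\ln(mj))$). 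This pigeonhole argument is where the proof genuinely departs from the one-dimensional Lemma~2 of \cite{RLA}, because the convergence mode $m \vee j \to \infty$ only forces one coordinate of $(m_N,j_N)$ to grow, and one must rule out the possibility that the witnessing indices remain bounded while the outer indices escape along a single coordinate.
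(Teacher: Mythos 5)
Your proof is correct and follows the same two-inclusion strategy as the paper: the easy direction via the pointwise bound $Y_{m,j}\ge Z_{m,j}$, and the reverse direction by passing from a maximizing index $(k,n)$ to $Z_{k,n}>\varepsilon$ using the monotonicity $a_{k,n}\le a_{m,j}$. Where you genuinely depart from the paper is the final pigeonhole step. The paper's proof simply writes $\{Y^{+}_{m,j}>\varepsilon \ \mbox{i.o.}\}\subset\{X_{k,n}>\varepsilon+a_{k,n}\ \mbox{i.o.}\}$ without checking that the witnessing pairs form an infinite set, whereas you explicitly rule out the possibility that infinitely many outer indices $(m_N,j_N)$ share only finitely many witnesses, by invoking $a_{m,j}\to\infty$. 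This extra step is not cosmetic: under only the stated hypothesis that $a_{m,j}$ is increasing in each variable, the asserted set equality can fail. For instance, take $a_{m,j}=1-1/(mj)$ and a realization with $X_{1,1}(\omega)=10$ and $X_{k,n}(\omega)=0$ otherwise; then $Y^{+}_{m,j}(\omega)>1$ for every $(m,j)$, so the left-hand event occurs, while $Z^{+}_{k,n}(\omega)>1$ only at $(1,1)$, so the right-hand event does not. Hence the unboundedness of $a_{m,j}$ that you appeal to --- automatic in all of the paper's applications, where $a_{m,j}=g(\ln(mj))\psi^{-1}(\ln(mj))\to\infty$ as $m\vee j\to\infty$ --- is genuinely needed, and your write-up closes a gap that the paper's own terse argument leaves open. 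The only caveat is that this hypothesis is not actually listed in the lemma's statement, so you should state it explicitly rather than describe it as implicit.
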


\begin{proof} It is easy to see that
$$\big\{\omega\in\Omega: Z_{m,j}^{+} > \varepsilon \ i.o. \big\} = \big\{\omega\in\Omega: X_{m,j} > \varepsilon + a_{m,j} \ i.o. \big\}$$
$$\subset \big\{\omega \in \Omega: \max_{1 \le k \le m, 1 \le n \le j}X_{k,n} > \varepsilon + a_{m,j} \ i.o. \big\} = \big\{\omega\in\Omega: Y^{+}_{m,j} > \varepsilon \ i.o. \big\}.$$

Also, as $a_{m,j}$ is an increasing function of $m$ and $j$, it holds
$$\big\{\omega\in\Omega: Y_{m,j}^+ > \varepsilon \ i.o. \big\} = \big\{\omega\in\Omega: X_{k,n} > \varepsilon +a_{m,j}, \ {\rm{for}} \ 1\leq k\leq m, 1\leq n\leq j \ i.o. \big\}$$
$$\subset\big\{\omega\in\Omega: X_{k,n} > \varepsilon + a_{k,n} \ i.o. \big\} = \big\{\omega\in\Omega: Z_{m,j}^+ > \varepsilon \ i.o.\big\}, $$ which completes the proof. \end{proof}

\begin{remark}
\label{io}
Let $\{A_{n}\}_{n=1}^\infty$ be an infinite sequence of events. By $\{A_n \ i.o.\}$ we denote an event that  infinitely many events from $\{A_{n}\}_{n=1}^\infty$ holds true. The importance of the notion i.o. can be explained by the following well-known statement, which is crucial for proving the almost sure convergence: $X_{k,n} \to 0$ almost surely, when $k\vee n\to+\infty,$ if and only if for all $\varepsilon > 0, P(|X_{k,n}| \geq \varepsilon \mbox{ i.o.}) = 0.$
\end{remark}
The following result extends Proposition \ref{A} to the case of double arrays of random variables.

\begin{theorem}
\label{P1}
Let $\{X_{k,n}, k \ge 1, n \ge 1\}$ be a double array of $\varphi$-subgaussian random variables and $g(\cdot)$ be a non-decreasing function such that for all $k,n\geq1$
\begin{equation}
\label{supc}
\tau_\varphi(X_{k,n})\leq g(\ln(kn))
\end{equation}
and
\[a_{m,j} = g(\ln(mj))\psi^{-1}(\ln(mj)).\]
Suppose that there exists an $\varepsilon_0 >0$ such that for every $\varepsilon \in (0,\varepsilon_0]$
\begin{equation}
\label{eq1}
\int_{0}^{\infty} \psi(x) q_{\varphi}(x) \exp\left(-\frac{\varepsilon q_{\varphi}(x)}{g(\psi(x)+\ln(2))}\right) dx < +\infty.
\end{equation}

Then $\lim_{m \vee j \to \infty} Y^{+}_{m,j}=0$  a.s.
\end{theorem}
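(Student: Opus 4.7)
The plan is to reduce the almost sure convergence to a summable tail condition via Borel--Cantelli, using Lemma~\ref{l2} to replace the running maximum $Y_{m,j}^+$ by the single variable $Z_{m,j}^+$. By Remark~\ref{io}, it suffices to show that for every $\varepsilon>0$,
\[
P\bigl(Y_{m,j}^+ > \varepsilon \text{ i.o.}\bigr)=0,
\]
and by Lemma~\ref{l2} this equals $P(Z_{m,j}^+>\varepsilon \text{ i.o.})$. Since $\{A_{m,j} \text{ i.o.}\}=\bigcap_{N}\bigcup_{m\vee j\ge N}A_{m,j}$, a subadditivity bound shows the probability vanishes as soon as $\sum_{m,j\ge 1}P(Z_{m,j}^+>\varepsilon)<+\infty$, so the remainder of the proof is to establish this summability.

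Next, I would estimate $P(Z_{m,j}^+>\varepsilon)=P(X_{m,j}>\varepsilon+a_{m,j})$. By Lemma~\ref{l1} and assumption (\ref{supc}),
\[
P(X_{m,j}>\varepsilon+a_{m,j})\le \exp\!\left(-\psi\!\left(\tfrac{\varepsilon+a_{m,j}}{g(\ln(mj))}\right)\right).
\]
With the chosen $a_{m,j}=g(\ln(mj))\psi^{-1}(\ln(mj))$ the argument of $\psi$ simplifies to $\psi^{-1}(\ln(mj))+\varepsilon/g(\ln(mj))$. Because $\psi$ is convex and differentiable with $\psi'=q_\varphi$, the supporting-hyperplane inequality $\psi(y+z)\ge \psi(y)+z\,q_\varphi(y)$ applied at $y=\psi^{-1}(\ln(mj))$ and $z=\varepsilon/g(\ln(mj))$ yields
\[
P(Z_{m,j}^+>\varepsilon)\le \frac{1}{mj}\exp\!\left(-\frac{\varepsilon\, q_\varphi(\psi^{-1}(\ln(mj)))}{g(\ln(mj))}\right).
\]
This is the crucial step: the factor $1/(mj)$ comes from the $\psi(\psi^{-1}(\ln(mj)))=\ln(mj)$ piece, while the exponential penalty compensates for the divisor sum $\sum 1/(mj)$ diverging.

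Finally, I would transfer the summability to the integrability statement (\ref{eq1}). A dyadic decomposition of $\mathbb{N}\times\mathbb{N}$ into blocks where $m\in[2^a,2^{a+1})$ and $j\in[2^b,2^{b+1})$ contains $2^{a+b}$ pairs, each with $1/(mj)\le 2^{-(a+b)}$, and on each block $\ln(mj)$ lies in an interval of length $O(\ln 2)$; monotonicity of $q_\varphi$, $\psi^{-1}$, and $g$ then bound the block sum by a single exponential in $(a+b)\ln 2$. Summing over blocks and comparing with the integral $\int_0^\infty r\exp(-\varepsilon q_\varphi(\psi^{-1}(r))/g(r+c\ln 2))\,dr$ (with $c$ a small absolute constant absorbed into the $\ln 2$ shift), the substitution $r=\psi(x)$ and $dr=q_\varphi(x)\,dx$ converts it into the hypothesis
\[
\int_0^\infty \psi(x)\,q_\varphi(x)\exp\!\left(-\frac{\varepsilon\,q_\varphi(x)}{g(\psi(x)+\ln 2)}\right)dx<+\infty,
\]
which is finite by (\ref{eq1}). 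The Borel--Cantelli lemma then delivers the claim. The main obstacle is the dyadic blocking in the last step: neither $q_\varphi(\psi^{-1}(\cdot))/g(\cdot)$ nor the full integrand is in general monotone, so care is needed to control the interaction of the increasing factors $q_\varphi(\psi^{-1}(\ln(mj)))$ and $g(\ln(mj))$ inside the exponent. The $\ln 2$ shift inside $g$ in the hypothesis is exactly the slack the block argument needs.
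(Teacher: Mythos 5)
Your overall strategy matches the paper's: reduce to $\sum_{m,j}P(Z_{m,j}^+>\varepsilon)<\infty$ via Lemma~\ref{l2}, Remark~\ref{io} and Borel--Cantelli, bound the tail by Lemma~\ref{l1} and \eqref{supc}, and exploit $\psi(y+z)-\psi(y)=\int_y^{y+z}q_\varphi\ge z\,q_\varphi(y)$ to extract the factor $e^{-\psi(y)}=1/(mj)$ together with the exponential penalty. Your termwise bound
\[
P(Z_{m,j}^+>\varepsilon)\le \frac{1}{mj}\exp\left(-\frac{\varepsilon\, q_\varphi(\psi^{-1}(\ln(mj)))}{g(\ln(mj))}\right)
\]
is correct and is in fact a cleaner packaging of the paper's mean-value-theorem step.

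The gap is in the last stage, and it sits exactly where you flag the ``main obstacle.'' Because you apply the convexity bound \emph{before} comparing the sum with an integral, you are left summing $\frac{1}{mj}h(\ln(mj))$ with $h(s)=\exp(-\varepsilon q_\varphi(\psi^{-1}(s))/g(s))$, and $h$ is a ratio of two increasing quantities, hence not monotone. Your dyadic blocking then forces you to bound the numerator from below at the left endpoint $(a+b)\ln 2$ of a block and the denominator from above at the right endpoint $(a+b+2)\ln 2$, and a further comparison of $\sum_c (c+1)H_c$ with an integral costs at least one more $\ln 2$. The condition you actually land on is therefore \eqref{eq1} with $g(\psi(x)+\ln 2)$ replaced by $g(\psi(x)+c\ln 2)$ for some $c\ge 2$. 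Since $g$ is an arbitrary non-decreasing (possibly unbounded) function and the exponent is $-\varepsilon q_\varphi/g$, enlarging the argument of $g$ makes the integrand \emph{larger}, so the shifted condition is strictly stronger than \eqref{eq1} and is not implied by it (nor recoverable by shrinking $\varepsilon$, since $g(r+2\ln 2)/g(r+\ln 2)$ need not be bounded). So ``absorbing the constant into the $\ln 2$ shift'' is precisely the step that does not go through for general $g$; your argument is complete only in the bounded-$g$ setting of Corollary~\ref{rc}. The paper avoids this by postponing the convexity/MVT step: it first compares the inner sum over $j$ with $\int_1^\infty\exp\bigl(-\psi\bigl(\psi^{-1}(\ln(mx))+\varepsilon/g(\ln(m(x+1)))\bigr)\bigr)dx$ over \emph{unit} intervals, where the integrand dominates each term because $\psi^{-1}(\ln(m\cdot))$ and $\varepsilon/g(\ln(m\cdot))$ are separately monotone inside the decreasing map $\exp(-\psi(\cdot))$; the single inequality $\ln(m(x+1))\le\ln(mx)+\ln 2$ for $x\ge 1$ then produces exactly the $\ln 2$ appearing in \eqref{eq1}, and only afterwards is the mean value theorem applied under the integral sign. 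To repair your proof, either perform the sum-to-integral comparisons before invoking convexity (as the paper does), or restate the hypothesis with $g(\psi(x)+C)$ for a suitable absolute constant $C$.
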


\begin{remark}
In the following, without loss of generality, we consider only  non-degenerated random variables with non-zero $\varphi$-subgaussian norms. Therefore, it holds $g(\cdot)>0.$ For the case of identically distributed $X_{k,n}$ we assume that $g(x) \equiv \tau_\varphi(X_{k,n}) \equiv C>0.$
\end{remark}

\begin{remark}
If the function $g(\cdot)$ is bounded by a constant $C$ from above then \eqref{supc} is the same as the corresponding assumption in Propositions {\rm\ref{A}} and {\rm\ref{C}}.
\end{remark}

\begin{proof}It follows from Lemma \ref{l2}, Remark \ref{io} and the Borel-Cantelli lemma that it is enough to show that for any $\varepsilon>0$
\[\sum_{m=1}^{\infty}  \sum_{j=1}^{\infty} P\big( Z^{+}_{m,j} \geq \varepsilon\big) < \infty, \]
because then $P\big(Z^{+}_{m,j}\geq\varepsilon \mbox{ i.o.} \big)=0$.

Note, that  by Lemma \ref{l1} and assumption \eqref{supc} for all $m,j \in \mathbb{N},$ except a finite number, it holds
$$P \big(Z^{+}_{m,j} \geq \varepsilon \big) = P\big( Z_{m,j} \geq \varepsilon\big) = P\big( X_{m,j} \geq g(\ln(m  j))\psi^{-1}(\ln (m  j))+\varepsilon\big) $$
$$= P\left( \frac{X_{m,j}}{g(\ln(m j))} \geq \psi^{-1}(\ln (m  j))+\frac{\varepsilon}{g(\ln(m j))}\right) $$
$$\leq P\left( \frac{X_{m,j}}{\tau_\varphi(X_{m,j})} \geq \psi^{-1}(\ln (m j))+\frac{\varepsilon}{g(\ln(m j))}\right) $$
$$\le \exp \left(-\psi\left( \psi^{-1}(\ln (m j))+\frac{\varepsilon}{g(\ln(m j))}\right) \right)$$
since $\psi(\cdot)$ is increasing.

Therefore, it is enough to prove that the double sum
\begin{equation}
\label{series}
S:=\sum_{m=1}^{\infty} \sum_{j=1}^{\infty} \exp \left(-\psi\left( \psi^{-1}(\ln (m j))+\frac{\varepsilon}{g(\ln(m j))}\right) \right)
\end{equation}
converges.

Let us fix $m \ge 1$ and investigate the behaviour of the inner sum
\[\sum_{j=1}^{\infty} \exp \left(-\psi\left( \psi^{-1}(\ln (m  j))+\frac{\varepsilon}{g(\ln(m j))}\right) \right).\]
Note that
\[\sum_{j=1}^{\infty} \exp \bigg(-\psi\left( \psi^{-1}(\ln (m  j))+\frac{\varepsilon}{g(\ln(m j))}\right) \bigg)\]
\[=\exp \bigg(-\psi\left( \psi^{-1}(\ln (m))+\frac{\varepsilon}{g(\ln(m))}\right) \bigg) + \sum_{j=2}^{\infty} \exp \bigg(-\psi\left( \psi^{-1}(\ln (m  j))+\frac{\varepsilon}{g(\ln(m j))}\right) \bigg).\]
Now, for $x\in [j-1,j]:$
\[\ln (m x)\le \ln (m j) \quad \mbox{and} \quad \ln(m(x + 1))\ge \ln (m j).\]
Because  $\psi(\cdot),$  $\psi^{-1}(\cdot)$ and $g(\cdot)$ are increasing functions,
\[\psi^{-1}(\ln (m  j))+\frac{\varepsilon}{g(\ln(m j))} \ge \psi^{-1}(\ln (m x)) + \frac{\varepsilon}{g(\ln(m(x + 1)))},\]
\[-\psi\left( \psi^{-1}(\ln (m  j))+\frac{\varepsilon}{g(\ln(m j))}\right) \le -\psi\left( \psi^{-1}(\ln (m x)) + \frac{\varepsilon}{g(\ln(m(x + 1)))}\right),\]
which results in
\[\sum_{j=2}^{\infty} \exp \left(-\psi\left( \psi^{-1}(\ln (m  j))+\frac{\varepsilon}{g(\ln(m j))}\right) \right)\]
\[ \le \int_1^{\infty} \exp \left(-\psi\left( \psi^{-1}(\ln (m x)) + \frac{\varepsilon}{g(\ln(m(x + 1)))}\right) \right)  dx. \]

Therefore, for fixed $m \ge 1$,
\begin{eqnarray}
\label{eq2}
&& \sum_{j=1}^{\infty} \exp \left(-\psi\left( \psi^{-1}(\ln (m  j))+\frac{\varepsilon}{g(\ln(m j))}\right) \right) \nonumber \\
&\le& \exp \left(-\psi\left( \psi^{-1}(\ln (m))+\frac{\varepsilon}{g(\ln(m))}\right) \right) \nonumber \\
&& + \int_1^{\infty} \exp \left(-\psi\left( \psi^{-1}(\ln (m x))+\frac{\varepsilon}{g(\ln(m(x + 1)))}\right) \right)  dx \nonumber \\
&\leq& \exp \left(-\psi\left( \psi^{-1}(\ln (m))+\frac{\varepsilon}{g(\ln(m))}\right) \right) \nonumber \\
&& + \int_1^{\infty} \exp \left(-\psi\left( \psi^{-1}(\ln (m x))+\frac{\varepsilon}{g(\ln(mx)+\ln(2))}\right) \right)dx
\end{eqnarray}
as $\ln(m(x+1))\leq \ln(mx) + \ln(2)$ for $m\geq 1$ and $x \geq 1$.

To study the last integral in \eqref{eq2} we use the substitution $t=\psi^{-1}(\ln (mx))$. Then $x= \frac{\exp(\psi(t))}{m}$ and

$$ \int_{1}^{\infty} \exp \left(-\psi\left(\psi^{-1}(\ln (mx))+\frac{\varepsilon}{g(\ln(mx)+\ln(2))}\right)\right)dx $$
$$\leq \frac{1}{m} \int_{\psi^{-1}(\ln m)}^{\infty} \psi'(t) \exp\left(\psi(t) - \psi\left(t+\frac{\varepsilon}{g(\psi(t)+\ln(2))}\right)\right)dt.$$
By the mean value theorem and $\psi^{\prime}(\cdot)= q_{\varphi}(\cdot)$ it follows that there exists such $\xi \in \left[t,t+\frac{\varepsilon}{g(\psi(t)+\ln(2))}\right]$ that it holds
$$\psi(t)-\psi\left(t+\frac{\varepsilon}{g(\psi(t)+\ln(2))}\right) = -\frac{\varepsilon}{g(\psi(t)+\ln(2))}\psi^{\prime}(\xi)$$
$$= -\frac{\varepsilon}{g(\psi(t)+\ln(2))}q_{\varphi}(\xi) \leq -\frac{\varepsilon q_{\varphi}(t)}{g(\psi(t)+\ln(2))},$$ as $q_{\varphi}(t)$ is a non-decreasing function.

Thus, we obtain the next upper bound
$$ \int_{1}^{\infty} \exp \left(-\psi\left(\psi^{-1}(\ln (mx))+\frac{\varepsilon}{g(\ln(mx)+\ln(2))}\right)\right)dx$$
\begin{equation}
\label{eq3}
\leq \frac{1}{m} \int_{\psi^{-1}(\ln m)}^{\infty} q_{\varphi}(t) \exp\left(- \frac{\varepsilon q_{\varphi}(t)}{g(\psi(t)+\ln(2))}\right) dt.
\end{equation}

Therefore, by \eqref{eq2}, \eqref{eq3} the double sum in \eqref{series} can be estimated as
$$ S \leq \sum_{m=1}^{\infty} \exp\left(-\psi\left(\psi^{-1}(\ln(m)\big)+\frac{\varepsilon}{g(\ln(m))}\right)\right)$$
\begin{equation}
\label{eqlong}
+\sum_{m=1}^\infty\frac{1}{m} \int_{\psi^{-1}(\ln m)}^{\infty} q_{\varphi}(t) \exp\left(-\frac{\varepsilon q_{\varphi}(t)}{g(\psi(t)+\ln(2))}\right) dt.
\end{equation}

Similar to the above computations the first sum in \eqref{eqlong} can be bounded as
$$\sum_{m=1}^{\infty} \exp\left(-\psi\left(\psi^{-1}\big(\ln(m)\big)+\frac{\varepsilon}{g(\ln(m))}\right)\right) \leq \exp\left(-\psi\left(\psi^{-1}(0)+\frac{\varepsilon}{g(0)}\right)\right)$$
$$+\int_{1}^{\infty}\exp\left(-\psi\left(\psi^{-1}(\ln(x)) + \frac{\varepsilon}{g(\ln(x+1))}\right)\right)dx $$
\begin{equation}\label{temp}
 = \exp\left(-\psi\left(\frac{\varepsilon}{g(0)}\right)\right) + \int_0^\infty q_{\varphi}(t) \exp\left(-\frac{\varepsilon q_{\varphi}(t)}{g(\psi(t)+\ln(2))}\right)  dt.
\end{equation}

As $\psi^{-1}(\cdot)$ is an increasing function, for the second sum in \eqref{eqlong} one gets
$$\sum_{m=1}^\infty\frac{1}{m} \int_{\psi^{-1}(\ln(m))}^{\infty} q_{\varphi}(t) \exp\left(-\frac{\varepsilon q_{\varphi}(t)}{g(\psi(t)+\ln(2))}\right) dt$$
$$\leq \int_{0}^{\infty} q_{\varphi}(t) \exp\left(-\frac{\varepsilon q_{\varphi}(t)}{g(\psi(t)+\ln(2))}\right)dt$$
\begin{equation}
\label{eqlong2}
 + \int_0^\infty\frac{1}{u} \int_{\psi^{-1}(\ln (u))}^{\infty} q_{\varphi}(t) \exp\left(-\frac{\varepsilon q_{\varphi}(t)}{g(\psi(t)+\ln(2))}\right) dtdu.
\end{equation}

By substitution $y = \ln(u)$ and changing the order of integration
$$ \int_0^\infty\frac{1}{u} \int_{\psi^{-1}(\ln (u))}^{\infty} q_{\varphi}(t) \exp\left(-\frac{\varepsilon q_{\varphi}(t)}{g(\psi(t)+\ln(2))}\right) dtdu $$
$$= \int_0^{+\infty}\int_{\psi^{-1}(y)}^{+\infty}q_{\varphi}(t)\exp\left(-\frac{\varepsilon q_{\varphi}(t)}{g(\psi(t)+\ln(2))}\right)dt dy$$
\begin{equation}
\label{eqlong3}
=\int_{0}^\infty \exp\left(-\frac{\varepsilon q_{\varphi}(t)}{g(\psi(t)+\ln(2))}\right)\int_0^{\psi(t)}dydt $$
$$= \int_{0}^\infty \exp\left(-\frac{\varepsilon q_{\varphi}(t)}{g(\psi(t)+\ln(2))}\right)\psi(t)dt < +\infty,
\end{equation} where the finiteness of the last integral follows from $\lim_{x\to\infty}\frac{\psi(x)}{x} = + \infty$ and the assumption  \eqref{eq1}.

Combining  \eqref{eqlong3} with \eqref{eqlong}, \eqref{temp}, \eqref{eqlong2} we obtain the convergence of $S$ which completes the proof. \end{proof}

For the case of double array of random variables with bounded $\varphi$-subgaussian norms the function $g(\cdot)$ can be selected identically equal to a constant. Therefore, Theorem \ref{P1} can be specified as follows.

\begin{corollary}
\label{rc}
Let $\{X_{k,n}, \ k\geq 1,\ n\geq 1  \}$ be a double array of $\varphi$-subgaussian random variables with $\sup_{k,n\in\mathbb{N}}\tau_{\varphi}(X_{k,n})\leq 1.$ Suppose, that there exist $\varepsilon_0 > 0$ such that for every $\varepsilon \in (0,\varepsilon_0]$
$$\int_0^{+\infty}\psi(x)q_{\varphi}(x)\exp\left(-\varepsilon q_{\varphi}(x)\right)dx<+\infty.$$ Then,
$$\lim_{m\vee j\to+\infty}\left( \max_{1\leq k \leq m, 1\leq n \leq j} X_{k,n} - \psi^{-1}(\ln(mj))\right)^+ = 0, \ a.s.$$
\end{corollary}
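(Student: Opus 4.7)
The plan is to derive this corollary as a direct specialization of Theorem~\ref{P1} by choosing the auxiliary function $g(\cdot)$ to be the constant $1$. Since the constant function is non-decreasing, it is an admissible choice in Theorem~\ref{P1}. The hypothesis $\sup_{k,n\in\mathbb{N}}\tau_\varphi(X_{k,n})\leq 1$ then gives condition~\eqref{supc}, namely $\tau_\varphi(X_{k,n})\leq 1 = g(\ln(kn))$ for every $k,n\geq 1$.

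With $g\equiv 1$, the normalising array prescribed by Theorem~\ref{P1} simplifies to
\[a_{m,j} = g(\ln(mj))\,\psi^{-1}(\ln(mj)) = \psi^{-1}(\ln(mj)),\]
so that the random variables $Y^+_{m,j}$ of Theorem~\ref{P1} coincide exactly with the quantities
$\left(\max_{1\leq k\leq m,\,1\leq n\leq j} X_{k,n} - \psi^{-1}(\ln(mj))\right)^+$
appearing in the corollary. It remains to verify that the integrability condition~\eqref{eq1} reduces to the one assumed in the corollary. Since $g(\psi(x)+\ln 2)=1$, the integrand in~\eqref{eq1} becomes $\psi(x)\,q_\varphi(x)\exp(-\varepsilon q_\varphi(x))$, which is finite by hypothesis for every $\varepsilon\in(0,\varepsilon_0]$.

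Thus all hypotheses of Theorem~\ref{P1} are met, and invoking the theorem yields the claimed almost sure convergence $\lim_{m\vee j\to\infty}Y^+_{m,j}=0$. There is no genuine obstacle: the only point worth flagging is that the remark following Theorem~\ref{P1} already anticipates exactly this kind of constant-$g$ specialization, so the verification is essentially bookkeeping.
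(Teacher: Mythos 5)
Your proposal is correct and follows exactly the paper's own route: the corollary is obtained by taking $g(\cdot)\equiv 1$ in Theorem~\ref{P1}, under which the normalising array becomes $\psi^{-1}(\ln(mj))$ and condition~\eqref{eq1} reduces to the stated integral condition. Nothing is missing; this is precisely the specialization the paper intends.
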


The asymptotic behaviour of the sequence $\{Y_{m,j}^-, \ m,j\ge 1\}$ cannot be described in terms of subgaussianity only. Roughly speaking, an opposite type of the inequality is required (see Remark 2 in \cite{RLA}). Moreover, in addition to the conditions of Proposition \ref{P1}, it is assumed that random variables in the double array are independent. The following result is an extension of Proposition \ref{B} to the case of double arrays.

\begin{theorem}
\label{P2}
Let $\{X_{k,n}, k,n \ge 1\}$ be a double array of independent $\varphi$-subgaussian random variables and the array $\{a_{m,j},m,j\ge 1\}$ and function  $g(\cdot)$ are defined in Theorem~{\rm \ref{P1}}.  Let $\kappa(x)$ be a positive increasing differentiable function with the derivative $r(x) = \kappa'(x)$ non-decreasing for $x>0$.   Assume that there exists $C>0$ such that for every $k,n \ge 1$  and all $x>0$
\[P\left(\frac{X_{k,n}}{g(\ln(kn))} < x\right) \le \exp\big( -Ce^{-\kappa(x)}\big),\] and
\begin{equation}
\label{th2cond}
\psi(x) - \kappa\left(\frac{xg(x)}{g(0)}\right)\geq C_0(x)
\end{equation} for some function $C_0(\cdot).$
Suppose that there exists $A, \ \varepsilon_0 >0$ such that for every $\varepsilon \in (0,\varepsilon_0]$
\[
\int_A^{+\infty}\exp\left( -\frac{Cy}{2}\exp\left( -\kappa\left(\frac{g(\ln(y))}{g(0)}\psi^{-1}(\ln(y)) - \frac{\varepsilon}{g(\ln(y))} \right) \right)\right) < + \infty
\] and
\[\int_{A}^{+\infty}\psi(y)q_{\varphi}(y)\exp\left(\psi(y) - \frac{C}{2}\exp\left( C_0(y) + \frac{ r\left(\frac{yg(\psi(y))}{g(0)}-\frac{\varepsilon}{g(\psi(y))}\right)}{g(\psi(y))} \right) \right)dy < + \infty.
\]

Then $\lim_{m \vee j \rightarrow \infty} Y^{-}_{m,j}=0$  a.s.
\end{theorem}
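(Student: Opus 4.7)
The plan is to apply the Borel-Cantelli lemma (as in Remark~\ref{io}): for every $\varepsilon\in(0,\varepsilon_0]$ it suffices to show that $\sum_{m\ge 1}\sum_{j\ge 1}P(Y^-_{m,j}>\varepsilon)<\infty$. Since $\{Y^-_{m,j}>\varepsilon\}=\{\max_{k\le m,\,n\le j}X_{k,n}<a_{m,j}-\varepsilon\}$, the independence of the array together with the assumed bound on $P(X_{k,n}/g(\ln(kn))<x)$ gives, after setting $S_{m,j}:=\sum_{k=1}^{m}\sum_{n=1}^{j}\exp(-\kappa((a_{m,j}-\varepsilon)/g(\ln(kn))))$,
\begin{equation*}
P(Y^-_{m,j}>\varepsilon)=\prod_{k\le m,\,n\le j}P\!\left(\frac{X_{k,n}}{g(\ln(kn))}<\frac{a_{m,j}-\varepsilon}{g(\ln(kn))}\right)\le \exp(-C\,S_{m,j}).
\end{equation*}

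The next step is to produce two useful lower bounds on $S_{m,j}$. Put $L=\ln(mj)$. Monotonicity of $g$ yields $g(0)\le g(\ln(kn))\le g(L)$ for every $(k,n)\in[1,m]\times[1,j]$, and together with $a_{m,j}=g(L)\psi^{-1}(L)$ this gives $(a_{m,j}-\varepsilon)/g(\ln(kn))\le g(L)\psi^{-1}(L)/g(0)-\varepsilon/g(L)$. Monotonicity of $\kappa$ then produces the crude uniform estimate
\begin{equation*}
S_{m,j}\ge mj\cdot \exp\!\left(-\kappa\!\left(\tfrac{g(L)\psi^{-1}(L)}{g(0)}-\tfrac{\varepsilon}{g(L)}\right)\right),
\end{equation*}
which is precisely the ingredient that matches the first integral condition. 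A sharper bound comes from first applying the mean value theorem to $\kappa(\cdot)-\kappa(\cdot-\varepsilon/g(L))$ (using $r$ non-decreasing), which extracts a factor of the form $r(\cdot)/g(\psi(\cdot))$, and then invoking hypothesis \eqref{th2cond} to replace $\kappa(xg(x)/g(0))$ by $\psi(x)-C_0(x)$; this is what will produce the exponent $\psi(t)-C_0(t)-r(\ldots)/g(\psi(t))$ inside the second integrand.

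The double series is then controlled by arguments parallel to those of Theorem~\ref{P1}. For fixed $m$, I would bound $\sum_{j\ge 1}\exp(-CS_{m,j})$ by a boundary term plus $\int_{1}^{\infty}(\cdot)\,dx$ and substitute $y=mx$; summing the resulting integrals over $m$ and using $\sum_{m\le y}m^{-1}\le 1+\ln y$ yields an extra logarithmic factor. To absorb this $\ln y$, split $\exp(-CS_{m,j})=\exp(-(C/2)S_{m,j})\exp(-(C/2)S_{m,j})$: multiplying one factor by $\ln y$ stays bounded under the assumed tail decay, while the other decays fast enough to match the first integral condition. Applying the sharper bound from step~2 and the substitution $t=\psi^{-1}(\ln y)$ (exactly as in the proof of Theorem~\ref{P1}) then produces the Jacobian $\psi(t)q_\varphi(t)e^{\psi(t)}$ and matches the second integral condition. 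Finiteness of both stated integrals therefore delivers $\sum_{m,j}P(Y^-_{m,j}>\varepsilon)<\infty$, and Borel-Cantelli closes the argument.

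The main technical obstacle will be the delicate interplay between the three monotone functions $g$, $\kappa$ and $\psi$: the argument of $\kappa$ carries the ratio $g(L)/g(\ln(kn))$ which is only easy to control at the extremes $g(0)$ and $g(L)$, so the passage from the crude estimate to the sharp one via \eqref{th2cond} must keep track precisely of whether $g$ is evaluated at $y$, at $\psi(y)$ or at $\ln y$, in order for the two integrands produced by the computation to coincide exactly with those appearing in the hypothesis.
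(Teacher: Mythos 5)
Your proposal follows essentially the same route as the paper's proof: Borel--Cantelli reduction, the independence product bound combined with the assumed lower-tail estimate and the monotonicity of $g$ to get $P(Y^-_{m,j}>\varepsilon)\le\exp\bigl(-Cmj\exp(-\kappa(\tfrac{g(L)}{g(0)}\psi^{-1}(L)-\tfrac{\varepsilon}{g(L)}))\bigr)$, integral comparison with the substitution $t=\psi^{-1}(\ln(\cdot))$, the mean value theorem exploiting that $r$ is non-decreasing, and condition \eqref{th2cond}, with the two integral hypotheses matching the boundary and bulk terms exactly as in the paper. The only (immaterial) difference is bookkeeping of the logarithmic factor from the sum over $m$: the paper keeps it as the $\psi(y)$ factor already present in the second integral condition rather than absorbing it by splitting the constant $C$.
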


\begin{proof}
Using the Borel-Cantelli lemma, we will prove that for every $\varepsilon \in (0,\varepsilon_0]$ $P\big(Y_{m,j}^{-} >\varepsilon \mbox{ i. o.}\big)=0$. By the independence of $X_{k,n}$ one gets
$$P\big(Y^{-}_{m,j} > \varepsilon\big) = P\big( Y_{m,j}< -\varepsilon\big)=P\big(\max_{1\le k\le m, 1\le n\le j} X_{k,n}<g(\ln(mj))\psi^{-1}(\ln(mj))-\varepsilon\big) $$
$$ = \prod_{k=1}^m \prod_{n=1}^j P\big(X_{k,n}<g(\ln(mj))\psi^{-1}(\ln(mj))-\varepsilon\big) $$
$$ = \prod_{k=1}^m \prod_{n=1}^j P\left(\frac{X_{k,n}}{g(\ln(kn))}<\frac{g(\ln(mj))}{g(\ln(kn))}\psi^{-1}(\ln(mj))-\frac{\varepsilon}{g(\ln(kn))}\right) $$
$$ \leq \prod_{k=1}^m \prod_{n=1}^j P\left(\frac{X_{k,n}}{g(\ln(kn))}< \frac{g(\ln(mj))}{g(0)}\psi^{-1}(\ln(mj))-\frac{\varepsilon}{g(\ln(mj))}\right) $$
$$ \leq \exp\left( -C m j\exp\left(-\kappa\left(\frac{g(\ln(mj))}{g(0)}\psi^{-1}(\ln(mj))-\frac{\varepsilon}{g(\ln(mj))}\right)\right)\right),$$ where we used the monotonicity of the function $g(\cdot)$ and $g(0)\geq 1.$
Therefore,
$$\sum_{m=1}^{\infty} \sum_{j=1}^{\infty} P \left( Y^{-}_{m,j} > \varepsilon \right) \le \sum_{m=1}^{\infty} \sum_{j=1}^{\infty} \exp\left(-C m j \right.$$
\begin{equation}
\label{est}
\left.\times\exp\left(-\kappa\left(\frac{g(\ln(mj))}{g(0)}\psi^{-1}(\ln(mj))-\frac{\varepsilon}{g(\ln(mj))}\right)\right)\right).
\end{equation}

As the functions $ g(\cdot)$ and $\psi^{-1}(\cdot)$ are non-decreasing, then for any fixed $m \ge 1$  we can majorize the second sum as
\[\sum_{j=1}^{\infty} \exp\left( -C m j\exp\left(-\kappa\left(\frac{g(\ln(mj))}{g(0)}\psi^{-1}(\ln(mj))-\frac{\varepsilon}{g(\ln(mj))}\right)\right)\right) \]
$$\le \exp\left( -C m\exp\left(-\kappa\left(\frac{g(\ln(m))}{g(0)}\psi^{-1}(\ln(m))-\frac{\varepsilon}{g(\ln(m))}\right)\right)\right)$$
$$+ \int_{2}^{\infty} \exp\left( -C m(x-1)\exp\left(-\kappa\left(\frac{g(\ln(mx))}{g(0)}\psi^{-1}(\ln(mx))-\frac{\varepsilon}{g(\ln(mx))}\right)\right)\right) dx.$$
As $x/2\leq x-1$ for $x\geq 2,$ the above integral can be estimated by
$$\int_{2}^{\infty} \exp\left( -\frac{Cmx}{2} \exp\left(-\kappa\left(\frac{g(\ln(mx))}{g(0)}\psi^{-1}(\ln(mx))-\frac{\varepsilon}{g(\ln(mx))}\right)\right)\right) dx.$$

By the change of variables $y =\psi^{-1}(\ln (mx)), \ x = \frac{1}{m}\exp(\psi(y))$, this integral equals
$$\frac{1}{m} \int_{\psi^{-1}(\ln(2m))}^{\infty}\exp\left(\psi(y)-\frac{C}{2}\exp\left( \psi(y) -\kappa\left(\frac{yg(\psi(y))}{g(0)} - \frac{\varepsilon}{g(\psi(y))}\right)\right)\right)q_{\varphi}(y)dy
$$
$$ = \frac{1}{m} \int_{\psi^{-1}(\ln(2m))}^{\infty} \exp \left(\psi(y)-\frac{C}{2}\exp\left( \psi(y) \right. \right. -\kappa\left(\frac{yg(\psi(y))}{g(0)}\right)$$
\begin{equation} 
\label{th2temp}
\left. \left. +\kappa\left(\frac{yg(\psi(y))}{g(0)}\right) - \kappa\left(\frac{yg(\psi(y))}{g(0)} - \frac{\varepsilon}{g(\psi(y))}\right)\right)\right)q_{\varphi}(y)dy.
\end{equation} 

By the mean value theorem, as $r(\cdot)$ is non-decreasing, it holds
$$\kappa\left(\frac{yg(\psi(y))}{g(0)}\right) - \kappa\left(\frac{yg(\psi(y))}{g(0)} - \frac{\varepsilon}{g(\psi(y))}\right) \geq \frac{\varepsilon}{g(\psi(y))}r\left(\frac{yg(\psi(y))}{g(0)} - \frac{\varepsilon}{g(\psi(y))}\right).$$

Thus, applying the above inequality and assumption \eqref{th2cond} one gets the following upper bound for the integral in \eqref{th2temp}
$$\frac{1}{m}\int_{\psi^{-1}(\ln(2m))}^{+\infty}\exp\left(\psi(y)-\frac{C}{2}\exp\left( C_0(y) + \frac{\varepsilon r\left(\frac{yg(\psi(y))}{g(0)} - \frac{\varepsilon}{g(\psi(y))}\right)}{g(\psi(y))}\right)\right)q_{\varphi}(y)dy.$$

Hence, the right hand side of \eqref{est} can be estimated by
$$ \sum_{m=1}^{\infty}  \exp\left( -C m\exp\left(-\kappa\left(\frac{g(\ln(m))}{g(0)}\psi^{-1}(\ln(m))-\frac{\varepsilon}{g(\ln(m))}\right)\right)\right) $$
\begin{small}
$$+ {\sum_{m=1}^{\infty}\frac{1}{m}\int_{\psi^{-1}(\ln(2m))}^{+\infty}\exp\left(\psi(y)-\frac{C}{2}\exp\left( C_0(y) + \frac{\varepsilon r\left(\frac{yg(\psi(y))}{g(0)} - \frac{\varepsilon}{g(\psi(y))}\right)}{g(\psi(y))}\right)\right)q_{\varphi}(y)dy} $$
\end{small}
$$ \leq \exp\left( -C\exp\left(-\kappa\left(\psi^{-1}(0)-\frac{\varepsilon}{g(0)}\right)\right)\right) $$
$$ + \int_{2}^{\infty} \exp\left( -\frac{Cu}{2}\exp\left(-\kappa\left(\frac{g(\ln(u))}{g(0)}\psi^{-1}(\ln(u))-\frac{\varepsilon}{g(\ln(u))}\right)\right)\right)du $$
$$ + \int_{\psi^{-1}(\ln(2))}^{+\infty}\exp\left(\psi(y)-\frac{C}{2}\exp\left(C_0(y) + \frac{\varepsilon r\left(\frac{yg(\psi(y))}{g(0)} - \frac{\varepsilon}{g(\psi(y))}\right)}{g(\psi(y))}\right)\right)q_{\varphi}(y)dy$$

\begin{footnotesize}
\begin{equation}
\label{th2temp2}
+\int\limits_1^{+\infty}\frac{1}{u}\int\limits_{\psi^{-1}(\ln(2u))}^{+\infty}\exp\left(\psi(y)-\frac{C}{2}\exp\left( C_0(y) + \frac{\varepsilon r\left(\frac{yg(\psi(y))}{g(0)} - \frac{\varepsilon}{g(\psi(y))}\right)}{g(\psi(y))}\right)\right)q_{\varphi}(y)dydu.
\end{equation}
\end{footnotesize}

By the change of variables $t = \ln(2u)$ and the change of the order of integration we obtain that the last integral equals
\begin{footnotesize}
\begin{equation}
\label{th2temp3}
\int_{\psi^{-1}(\ln(2))}^{+\infty}(\psi(y)-\ln(2))q_{\varphi}(y)\exp\left(\psi(y)-\frac{C}{2}\exp\left(C_0(y) + \frac{\varepsilon r\left(\frac{yg(\psi(y))}{g(0)} - \frac{\varepsilon}{g(\psi(y))}\right)}{g(\psi(y))} \right) \right)dy
\end{equation}
\end{footnotesize}

Then, the boundedness $\sum_{m=1}^\infty\sum_{j=1}^\infty P(Y_{m,j}^- > \varepsilon) < +\infty$ follows from \eqref{th2temp2}, \eqref{th2temp3} and the assumptions of the theorem.  \end{proof}

For the case of double arrays of random variables with uniformly bounded \mbox{$\varphi$-subgaussian} norms the next specification, holds true.

\begin{corollary}
\label{cor2}
Let $\{X_{k,n}, \ k\geq 1, \ n\geq 1  \}$ be a double array of $\varphi$-subgaussian random variables with $\sup_{k,n\in\mathbb{N}}\tau_{\varphi}(X_{k,n})\leq 1.$ Let $\kappa(x)$ be a positive increasing differentiable function with the derivative $r(x) = \kappa'(x)$ that is non-decreasing for $x>0$ and $\psi(x)-\kappa(x)\geq C_0(x)$ for some function $C_0(\cdot).$ Assume that there exists $C>0$ such that for every $k,n\geq 1$ and $x>0$
$$P(X_{k,n}<x) \leq \exp\big(-C\exp(-\kappa(x))\big).$$
Suppose that there exist constants $A, \varepsilon_0 > 0$ such that for every $\varepsilon \in (0,\varepsilon_0]$
\begin{equation}
\label{cor21}
\int_A^{+\infty}\exp\left(-\frac{Cy}{2}\exp\big( -\kappa(\psi^{-1}(\ln(y))-\varepsilon)  \big) \right)dy < + \infty
\end{equation} and
\begin{equation}
\label{cor22}\int_A^{+\infty}\psi(y)q_{\varphi}(y)\exp\left(\psi(y) -\frac{C}{2}\exp\big(C_0(y)+\varepsilon r(y-\varepsilon) \big) \right)dy < +\infty.\end{equation} Then,
$$\lim_{m\vee j\to +\infty}\left(\max_{1\leq k \leq m, 1 \leq n \leq j}X_{k,n}-\psi^{-1}(\ln(mj))\right)^- = 0 \ \ a.s.$$
 \end{corollary}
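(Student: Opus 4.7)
The plan is to derive Corollary \ref{cor2} as a direct specialization of Theorem \ref{P2} by choosing the auxiliary function $g(\cdot) \equiv 1$. This $g$ is trivially non-decreasing with $g(0)=1$, so the hypothesis $\sup_{k,n}\tau_{\varphi}(X_{k,n}) \leq 1$ immediately yields $\tau_{\varphi}(X_{k,n}) \leq g(\ln(kn))$, i.e.\ condition \eqref{supc} of Theorem~\ref{P1}. Moreover, the centering sequence becomes $a_{m,j} = g(\ln(mj))\psi^{-1}(\ln(mj)) = \psi^{-1}(\ln(mj))$, which matches the normalization appearing in the conclusion of Corollary~\ref{cor2}.

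Next, I would match the remaining hypotheses of Theorem~\ref{P2} one by one. With $g \equiv 1$, the assumption $P(X_{k,n} < x) \leq \exp(-Ce^{-\kappa(x)})$ is exactly the tail bound $P(X_{k,n}/g(\ln(kn)) < x) \leq \exp(-Ce^{-\kappa(x)})$ required by Theorem~\ref{P2}. Since $xg(x)/g(0)=x$, the structural condition $\psi(x)-\kappa(x) \geq C_0(x)$ reduces to \eqref{th2cond}. Finally, every occurrence of $g(\ln(y))/g(0)$, $g(\ln(y))$ or $g(\psi(y))$ equals $1$, so $\tfrac{yg(\psi(y))}{g(0)} - \tfrac{\varepsilon}{g(\psi(y))} = y-\varepsilon$ and $\tfrac{g(\ln(y))}{g(0)}\psi^{-1}(\ln(y)) - \tfrac{\varepsilon}{g(\ln(y))} = \psi^{-1}(\ln(y))-\varepsilon$. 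Substituting these identifications into the two integral assumptions of Theorem~\ref{P2} produces exactly \eqref{cor21} and \eqref{cor22}.

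Having verified every hypothesis of Theorem~\ref{P2} with the choice $g \equiv 1$, I would invoke that theorem to obtain $\lim_{m\vee j\to\infty} Y_{m,j}^{-} = 0$ almost surely, which, given the simplified form of $a_{m,j}$, is precisely the conclusion of Corollary~\ref{cor2}.

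The main (minor) obstacle is purely bookkeeping: making sure that the factor $\varepsilon$ appearing in the numerator of $\varepsilon r(\cdot)/g(\psi(y))$ in the proof of Theorem~\ref{P2} (which is what is actually used to establish convergence of the governing integral) is tracked correctly so that it becomes $\varepsilon r(y-\varepsilon)$ after substitution and thereby reproduces the integrand in \eqref{cor22}. Once this substitution is made consistently across both integrals, no further analytic argument is needed.
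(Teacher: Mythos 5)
Your proposal is correct and takes essentially the same route as the paper: Corollary \ref{cor2} is stated there as the immediate specialization of Theorem \ref{P2} to uniformly bounded norms, i.e.\ precisely the substitution $g(\cdot)\equiv 1$ (so $g(0)=1$, $a_{m,j}=\psi^{-1}(\ln(mj))$, and both integral conditions collapse to \eqref{cor21} and \eqref{cor22}). Your side remark about tracking the factor $\varepsilon$ is also apt, since the quantity actually used in the proof of Theorem \ref{P2} is $\varepsilon r(\cdot)/g(\psi(y))$, matching the $\varepsilon r(y-\varepsilon)$ in \eqref{cor22}.
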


\begin{remark}
\label{rm6}
Lemma {\rm\ref{l1}} provides the upper bound on the tail probability of the $\varphi$-subgaussian random variable $X_{k,n}$
$$P\big(X_{k,n} \geq x \big) \leq \exp\big(-\psi(Ax)\big), \ x>0.$$
The condition $P\big( X_{k,n} < x\big) \leq \exp\big(-C\exp(-\kappa(x)) \big)$ in some sence is opposite. Namely, the lower bound on the tail probability
$$P\big( X_{k,n} \geq x \big) \geq C\exp\big(-\kappa(x)\big)$$ implies
$$P\big( X_{k,n} < x \big) \leq \exp\big(-P(X_{k,n}\geq x)\big)\leq \exp\big(-C\exp(-\kappa(x)) \big)$$ as $t\leq \exp(-(1-t)).$
\end{remark}

\begin{theorem}\label{th1}
Assume that $\{X_{k,n}, k \ge 1,  n \ge 1\}$ is a double array of independent $\varphi$-subgaussian random variables. If the assumptions of Theorems~{\rm\ref{P1}} and~ {\rm\ref{P2}} are satisfied, then
$\lim_{m \vee j \to \infty} Y_{m,j}=0$  a.s.
\end{theorem}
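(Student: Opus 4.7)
The plan is to observe that Theorem~\ref{th1} follows by direct combination of the conclusions of Theorems~\ref{P1} and~\ref{P2}, together with the elementary decomposition $Y_{m,j}=Y_{m,j}^{+}-Y_{m,j}^{-}$. Since $Y_{m,j}^{+}$ and $Y_{m,j}^{-}$ cannot be simultaneously positive, one has the pointwise identity $|Y_{m,j}|=Y_{m,j}^{+}+Y_{m,j}^{-}$, so controlling both positive and negative parts controls $|Y_{m,j}|$ itself.

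First, I would invoke Theorem~\ref{P1}: since the assumptions (condition~\eqref{supc}, the choice of $a_{m,j}=g(\ln(mj))\psi^{-1}(\ln(mj))$, and the integrability condition~\eqref{eq1}) are in force, there exists an event $\Omega_1\subset\Omega$ with $P(\Omega_1)=1$ such that for every $\omega\in\Omega_1$, $\lim_{m\vee j\to\infty}Y_{m,j}^{+}(\omega)=0$. Next, since we additionally assume independence of the array together with the bounds on $P(X_{k,n}/g(\ln(kn))<x)$ and the two integrability conditions of Theorem~\ref{P2}, I would invoke that theorem to produce an event $\Omega_2\subset\Omega$ with $P(\Omega_2)=1$ on which $\lim_{m\vee j\to\infty}Y_{m,j}^{-}(\omega)=0$.

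Then I would set $\Omega_0:=\Omega_1\cap\Omega_2$, which satisfies $P(\Omega_0)=1$. For any $\omega\in\Omega_0$ and any $\varepsilon>0$, by the definition of $\lim(\max)$ convergence there exist integers $N_1,N_2$ such that $Y_{m,j}^{+}(\omega)<\varepsilon/2$ for all $m\vee j\ge N_1$ and $Y_{m,j}^{-}(\omega)<\varepsilon/2$ for all $m\vee j\ge N_2$. Taking $N=\max(N_1,N_2)$ and using $|Y_{m,j}|=Y_{m,j}^{+}+Y_{m,j}^{-}$, I would conclude that $|Y_{m,j}(\omega)|<\varepsilon$ whenever $m\vee j\ge N$, which is exactly $\lim_{m\vee j\to\infty}Y_{m,j}(\omega)=0$. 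Since this holds on a set of full probability, the almost sure convergence follows.

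There is essentially no obstacle in this argument: the two previous theorems do all of the analytic work via Borel--Cantelli on the double-indexed events $\{Z_{m,j}^{+}\ge\varepsilon\}$ and $\{Y_{m,j}^{-}>\varepsilon\}$ respectively, and the only thing to verify at this final step is the routine fact that the intersection of two full-measure events remains of full measure and that $\lim(\max)$ convergence is preserved under sums of two nonnegative arrays converging to zero. The only point worth stating explicitly in the written proof is the reminder that $Y_{m,j}^{+}$ and $Y_{m,j}^{-}$ are never simultaneously strictly positive, which justifies passing from control of both parts to control of $Y_{m,j}$ itself.
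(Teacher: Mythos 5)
Your proposal is correct and matches the paper's approach: the paper simply states that Theorem~\ref{th1} follows from Theorems~\ref{P1} and~\ref{P2}, and your argument spells out the routine details (the identity $|Y_{m,j}|=Y_{m,j}^{+}+Y_{m,j}^{-}$, intersecting the two full-measure events, and verifying the $\lim(\max)$ convergence) that the paper leaves implicit.
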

The proof of  Theorem \ref{th1} follows from the proofs of Theorems~\ref{P1} and~\ref{P2}.

\section{On convergence rate of running maxima of random double arrays}
\label{conv}

This section investigates the series $$\sum_{m=1}^{+ \infty}\sum_{j=1}^{+ \infty}( mj)^{- \alpha} P(Y_{m,j}^+ > \varepsilon), \ \varepsilon >0.$$ It proves that the series converges for a suitable constant $\alpha.$

The following theorem and corollary are generalizations of Proposition \ref{C} to the case of $\varphi$-subgaussian arrays with not necessary uniformly bounded $\varphi$-subgaussian norms.

\begin{theorem}
\label{P3}
Let $\{X_{k,n},k \geq 1, n \geq 1\}$ be a double array of $\varphi$-subgaussian random
variables such that for all $m,j\geq 1,$ $1\leq k\leq m,\ 1\leq n \leq j,$ and some positive-valued function $f(\cdot)$ it holds
\[\frac{g(\ln(mj))}{\tau_{\varphi}(X_{k,n})} \geq f\left(\frac{mj}{kn}\right) \ge 1\] and
$$\sum_{m=1}^\infty \sum_{j=1}^\infty \sum_{k=1}^m \sum_{n=1}^j \left(mj\right)^{-\alpha-f(\frac{mj}{kn})}< + \infty.$$ Then
\begin{equation}
\label{thp3}
\sum_{m=1}^{\infty}\sum_{j=1}^{\infty}(mj)^{- \alpha} P(Y_{m,j}^+ >0) < + \infty.
\end{equation}
\end{theorem}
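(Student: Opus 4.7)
The plan is to estimate $P(Y_{m,j}^+>0)$ by a union bound over the cells of the observation window, apply the $\varphi$-subgaussian tail bound from Lemma~\ref{l1}, and then convert the resulting exponential into a power of $mj$ using convexity of $\psi$ and the hypothesis relating $g(\ln(mj))$, $\tau_\varphi(X_{k,n})$ and $f$.

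The first step is standard: since $a_{m,j}=g(\ln(mj))\psi^{-1}(\ln(mj))$ (as specified in Theorem~\ref{P1}),
\[
P\bigl(Y_{m,j}^+>0\bigr)=P\bigl(\max_{1\le k\le m,\,1\le n\le j}X_{k,n}>a_{m,j}\bigr)\le \sum_{k=1}^m\sum_{n=1}^j P\bigl(X_{k,n}>a_{m,j}\bigr),
\]
and Lemma~\ref{l1} gives
\[
P\bigl(X_{k,n}>a_{m,j}\bigr)\le \exp\Bigl(-\psi\bigl(a_{m,j}/\tau_\varphi(X_{k,n})\bigr)\Bigr).
\]

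The core step is to simplify the exponent. Because $\psi$ is an Orlicz $N$-function it is convex with $\psi(0)=0$, hence for every $c\ge 1$ and $x\ge 0$ we have $\psi(cx)\ge c\psi(x)$. Applying this with $c=g(\ln(mj))/\tau_\varphi(X_{k,n})\ge f(mj/(kn))\ge 1$ and $x=\psi^{-1}(\ln(mj))$,
\[
\psi\bigl(a_{m,j}/\tau_\varphi(X_{k,n})\bigr)\ge \frac{g(\ln(mj))}{\tau_\varphi(X_{k,n})}\,\ln(mj)\ge f\bigl(mj/(kn)\bigr)\,\ln(mj),
\]
so that $P(X_{k,n}>a_{m,j})\le (mj)^{-f(mj/(kn))}$.

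Combining the estimates and multiplying by $(mj)^{-\alpha}$ yields
\[
(mj)^{-\alpha}P\bigl(Y_{m,j}^+>0\bigr)\le \sum_{k=1}^m\sum_{n=1}^j (mj)^{-\alpha-f(mj/(kn))},
\]
and summing over $m,j\ge 1$ gives \eqref{thp3} directly from the second hypothesis of the theorem. I do not anticipate a real obstacle here; the only non-obvious ingredient is the convexity inequality $\psi(cx)\ge c\psi(x)$ for $c\ge 1$, which is where the assumption $f\ge 1$ is used in an essential way. The assumption $f\ge 1$ is also what makes the bound non-trivial compared to the crude $P(X_{k,n}>a_{m,j})\le (mj)^{-1}$ obtained by taking $c=1$.
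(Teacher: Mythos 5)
Your proof is correct and follows essentially the same route as the paper: a union bound over the cells, Lemma~\ref{l1}, and the Orlicz inequality $\psi(\theta x)\ge\theta\psi(x)$ for $\theta\ge 1$ to turn the exponential into the power $(mj)^{-f(mj/(kn))}$. The only cosmetic difference is that you apply the convexity inequality with $c=g(\ln(mj))/\tau_\varphi(X_{k,n})$ and then use $c\ge f$, whereas the paper first uses monotonicity of $\psi$ to replace that ratio by $f(mj/(kn))$ and then applies the same inequality; the two orderings are equivalent.
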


\begin{proof}  By Lemma \ref{l1} it follows that
$$P(Y_{m,j}^+ >0) = P\left(\max _{1 \leq k \leq m,1 \leq n \leq j}X_{k,n} > g(\ln(mj))\psi^{-1}\left(\ln (mj)\right)\right)$$
$$\leq \sum_{k=1}^m \sum_{n=1}^j P\left(\frac{X_{k,n}}{\tau_{\varphi}(X_{k,n})} > \frac{g(\ln(mj))\psi^{-1}(\ln(mj))}{\tau_\varphi(X_{k,n})} \right)$$
$$\leq\sum_{k=1}^m \sum_{n=1}^j  \exp \left(-\psi\left(f\left(\frac{mj}{kn}\right)\psi^{-1}(\ln(mj))\right)
\right)$$
$$\leq\sum_{k =1}^m \sum_{n =1}^j  \exp \left(-f\left(\frac{mj}{kn}\right)\ln(mj)\right).
$$
The last inequality follows from $\psi(\theta x) \ge \theta \psi(x), \ \theta\ge 1,$ that is true for any Orlicz $N$-function.

Hence,
$$\sum_{m=1}^{+ \infty}\sum_{j=1}^{+ \infty}(mj)^{- \alpha} P(Y_{m,j}^+ >0) \le \sum_{m=1}^{+ \infty}\sum_{j=1}^{+ \infty} \sum_{k=1}^{m}\sum_{n=1}^{j} (mj) ^{- \alpha -f(\frac{mj}{kn})}<+\infty,$$
by the assumption of the Theorem.
\end{proof}

\begin{corollary}
\label{cor3}
Let the conditions of Theorem {\rm\ref{P3}} are satisfied and $f(x)\geq c_0>0$ for $x\geq 1.$ Then, {\eqref{thp3}} holds true for $\alpha > 2 - c_0$.
\end{corollary}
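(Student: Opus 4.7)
The plan is to reduce the corollary directly to Theorem \ref{P3}. Since all the hypotheses of that theorem are assumed to hold, the only thing I need to confirm is that, under the additional lower bound $f(x) \geq c_0$ for $x \geq 1$, the quadruple series
$$\sum_{m=1}^\infty \sum_{j=1}^\infty \sum_{k=1}^m \sum_{n=1}^j (mj)^{-\alpha - f(mj/(kn))}$$
converges for every $\alpha > 2 - c_0$. Once this is in hand, Theorem \ref{P3} immediately delivers \eqref{thp3}.

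The first step is to observe that throughout the range of summation $mj/(kn) \geq 1$, so by the hypothesis $f(mj/(kn)) \geq c_0$. Since $mj \geq 1$ and the map $\beta \mapsto (mj)^{-\beta}$ is non-increasing, this yields the term-wise bound $(mj)^{-\alpha - f(mj/(kn))} \leq (mj)^{-\alpha - c_0}$. Summing the right-hand side over the $mj$ pairs $(k,n)$ with $1 \leq k \leq m$ and $1 \leq n \leq j$ collapses the quadruple sum to a simple double sum with general term $(mj)^{1-\alpha - c_0}$.

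The second step is to factor the surviving double sum as $\bigl(\sum_{m \geq 1} m^{1-\alpha - c_0}\bigr)^2$, a squared $p$-series that is finite exactly when $1 - \alpha - c_0 < -1$, i.e., precisely when $\alpha > 2 - c_0$. Chaining these estimates with Theorem \ref{P3} gives the claim.

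The argument is essentially a two-line computation; there is no substantive obstacle. The only point requiring any care is the monotonicity step $(mj)^{-\alpha - f(mj/(kn))} \leq (mj)^{-\alpha - c_0}$, which relies on the bases being at least $1$ so that a more negative exponent produces a smaller value; this is automatic because $m, j$ are positive integers. Everything else is $p$-series bookkeeping.
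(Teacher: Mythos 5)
Your argument is correct and is essentially the paper's own proof: bound $f(mj/(kn))\geq c_0$ on the summation range, collapse the inner double sum to $(mj)^{1-\alpha-c_0}$, factor the remaining double sum as $\bigl(\sum_{m\geq 1} m^{1-\alpha-c_0}\bigr)^2$, and conclude convergence for $\alpha>2-c_0$ via Theorem~\ref{P3}. No issues.
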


\begin{proof}
It follows from the assumptions that
$$\sum_{k=1}^m \sum_{n=1}^j(mj)^{-\alpha-f(\frac{mj}{kn})}\leq (mj)^{1-\alpha-c_0}.$$
Hence,
$$\sum_{m=1}^\infty \sum_{j=1}^\infty(mj)^{-\alpha}P(Y_{m,j}^+>0)\leq \left(\sum_{m=1}^\infty m^{1-\alpha-c_0}\right)^2<+\infty$$ as the right hand side converges for $\alpha > 2 - c_0,$ which completes the proof.
\end{proof}

\begin{remark}  If the conditions of Theorem {\rm \ref{P3}} or Corollary {\rm \ref{cor3}} are satisfied, then for every $\varepsilon > 0$
$$\sum_{m=1}^{+ \infty}\sum_{j=1}^{+ \infty}( mj)^{- \alpha} P(Y_{m,j}^+ > \varepsilon) <\infty$$  as the inequality $P( Y_{m,j}^{+} > \varepsilon) \leq P( Y_{m,j}^+ > 0 )$ holds true.
\end{remark}

Now we proceed with extending Proposition \ref{D}, showing that the rate of convergence is sharp.

\begin{theorem}
\label{P4}
Let  $\{X_{k,n},  k,n \geq 1 \}$  be a double array of independent $\varphi$-subgaussian random variables with $\tau_{\varphi}(X_{k,n})\equiv 1$ satisfying the following assumptions:

\begin{enumerate}[label=(\roman*)]
\item there exists a strictly increasing function $\kappa:\mathbb{R}^+\to\mathbb{R}^+$ such that for every $k,n\geq 1$ and some positive constant C it holds
$$P(X_{k,n}>x) \geq C\exp(-\kappa(x)), \ x>0;$$
\item there exists $x_0>0$ such that
$$\exp(-\kappa(x))\geq C_1\exp(-B\psi(x)),$$ for all $x\geq x_0,$ where $B,C_1>0$;
\item for some $\varepsilon > 0$ $$\sup_{x>x_0}\frac{q_{\varphi}(x+\varepsilon)}{\psi(x)}\leq C_2<+\infty.$$
\end{enumerate}

Then, for any $\alpha < 2-B(1+C_2\varepsilon)$ it holds
\[\sum_{m=1}^{\infty} \sum_{j=1}^{\infty} (mj)^{- \alpha} P(Y_{m,j}^+ >\varepsilon) =+\infty. \]
\end{theorem}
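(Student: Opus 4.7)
\textbf{Proof proposal for Theorem \ref{P4}.}

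Since $\tau_\varphi(X_{k,n})\equiv 1$, we may take $g\equiv 1$ so that $a_{m,j}=\psi^{-1}(\ln(mj))$. The plan is to produce a lower bound of the form $P(Y_{m,j}^+>\varepsilon)\gtrsim (mj)^{1-B(1+C_2\varepsilon)}$ for all sufficiently large $mj$, which by comparison with the divergent series $\sum_{m,j}(mj)^{-\beta}$ for $\beta\le 1$ gives divergence of $\sum_{m,j}(mj)^{-\alpha}P(Y_{m,j}^+>\varepsilon)$ whenever $\alpha<2-B(1+C_2\varepsilon)$. First I would use independence to write
\[
P(Y_{m,j}^+>\varepsilon)=1-\prod_{k=1}^{m}\prod_{n=1}^{j}P\bigl(X_{k,n}\le \psi^{-1}(\ln(mj))+\varepsilon\bigr),
\]
apply assumption (i) to obtain the uniform lower bound $P(X_{k,n}>\psi^{-1}(\ln(mj))+\varepsilon)\ge p_{m,j}:=C\exp(-\kappa(\psi^{-1}(\ln(mj))+\varepsilon))$ for all $1\le k\le m$, $1\le n\le j$, and invoke $1-p\le e^{-p}$ to deduce
\[
P(Y_{m,j}^+>\varepsilon)\ge 1-\exp(-mj\,p_{m,j}).
\]

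The heart of the proof is to estimate the exponent in $p_{m,j}$. By assumption (ii), for $x\ge x_0$ we have $\kappa(x)\le B\psi(x)-\ln C_1$, so for $mj$ large enough that $\psi^{-1}(\ln(mj))\ge x_0$,
\[
\kappa\bigl(\psi^{-1}(\ln(mj))+\varepsilon\bigr)\le B\,\psi\bigl(\psi^{-1}(\ln(mj))+\varepsilon\bigr)-\ln C_1.
\]
Setting $x_*:=\psi^{-1}(\ln(mj))$, the mean value theorem together with $\psi'=q_\varphi$ and the monotonicity of $q_\varphi$ yields
\[
\psi(x_*+\varepsilon)-\psi(x_*)=\varepsilon\,q_\varphi(\xi)\le \varepsilon\,q_\varphi(x_*+\varepsilon),
\]
for some $\xi\in[x_*,x_*+\varepsilon]$. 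Assumption (iii) then gives $q_\varphi(x_*+\varepsilon)\le C_2\psi(x_*)=C_2\ln(mj)$. Combining these estimates, I obtain
\[
\kappa\bigl(\psi^{-1}(\ln(mj))+\varepsilon\bigr)\le B(1+C_2\varepsilon)\ln(mj)-\ln C_1,
\]
and hence $mj\,p_{m,j}\ge CC_1\,(mj)^{1-B(1+C_2\varepsilon)}$ for all sufficiently large $mj$.

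Finally, using the elementary inequality $1-e^{-x}\ge x/(1+x)$ valid for $x\ge 0$, I get
\[
P(Y_{m,j}^+>\varepsilon)\ge \frac{mj\,p_{m,j}}{1+mj\,p_{m,j}}\ge c\,\min\!\bigl(1,(mj)^{1-B(1+C_2\varepsilon)}\bigr)
\]
for some $c>0$, valid for all $mj$ large enough. Therefore, up to finitely many terms,
\[
\sum_{m,j}(mj)^{-\alpha}P(Y_{m,j}^+>\varepsilon)\ge c\sum_{m,j}(mj)^{1-\alpha-B(1+C_2\varepsilon)},
\]
and since $\alpha<2-B(1+C_2\varepsilon)$ means that the exponent on the right satisfies $1-\alpha-B(1+C_2\varepsilon)>-1$, the double sum $\sum_m m^{-\beta}\sum_j j^{-\beta}$ with $\beta=\alpha+B(1+C_2\varepsilon)-1<1$ diverges, completing the argument.

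The principal obstacle is the interlocking three-assumption estimate of $\kappa(\psi^{-1}(\ln(mj))+\varepsilon)$: assumption (ii) must trade $\kappa$ for $\psi$, the mean value theorem must push $\psi(x_*+\varepsilon)$ back to $\psi(x_*)=\ln(mj)$ with a controlled error, and assumption (iii) is precisely what is needed to keep this error linear in $\ln(mj)$ with the correct multiplicative constant $C_2\varepsilon$. A minor technical point is that the whole argument is only asymptotic (it needs $\psi^{-1}(\ln(mj))\ge x_0$), but this is harmless for a divergence statement, since finitely many initial terms cannot affect the divergence of the series.
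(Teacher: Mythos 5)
Your proposal is correct and follows essentially the same route as the paper's proof: independence plus assumption (i) and $1-t\le e^{-t}$ give the lower bound $1-\exp(-mj\,p_{m,j})$, and assumptions (ii) and (iii) (via the mean value theorem) convert the exponent into $(1+C_2\varepsilon)B\ln(mj)$, yielding $P(Y_{m,j}^+>\varepsilon)\ge C(mj)^{1-B(1+C_2\varepsilon)}$ and divergence for $\alpha<2-B(1+C_2\varepsilon)$. The only cosmetic difference is that you use $1-e^{-x}\ge x/(1+x)$ where the paper uses $1-e^{-x}\ge xe^{-x}$ together with Lemma~\ref{l1} to bound the residual exponential factor by $e^{-1}$; both work, and your implicit reduction of $\min(1,(mj)^{1-B(1+C_2\varepsilon)})$ to the power term is harmless since the hypotheses force $B\ge 1$.
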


\begin{proof} By the theorem's assumption one can take $g(\cdot)\equiv 1$ and obtain
$$ P(Y_{m,j}^+ >\varepsilon) = P\left(\max _{1 \leq k \leq m, 1 \leq n \leq j} X_{k,n} > \psi^{-1}\left(\ln(mj)\right)+\varepsilon\right) $$
$$ = 1 - \prod_{k=1}^m \prod_{n=1}^j \left(1-P\left(X_ {k,n} \geq \psi^{-1}(\ln (mj))+\varepsilon \right)\right) $$
$$ \geq 1 -\left( 1- C\exp\left(-\kappa\left(\psi^{-1}(\ln (mj))+\varepsilon\right)\right)\right)^{mj}.$$

Using the inequality $1-t\leq e^{-t}, \ t\geq 0,$ one obtains
$$P(Y_{m.j}^{+}\geq \varepsilon)\geq 1 - \exp\left( -Cmj\exp\left(-\kappa\left(\psi^{-1}\left(\ln(mj)\right) + \varepsilon  \right) \right)\right).$$
Then, by the inequality $1-\exp(-t)\geq t\exp(-t), \ t\geq0,$ it follows that
$$P(Y_{m,n}^{+}\geq \varepsilon) \geq Cmj\exp\left(-\kappa\left(\psi^{-1}\left(\ln(mj)\right)+\varepsilon \right) \right) $$
$$\times \exp\left( -Cmj\exp\left(-\kappa\left(\psi^{-1}\left(\ln(mj) \right)+\varepsilon\right) \right)\right).$$

By Lemma \ref{l1} and assumption $(i)$
$$C\exp(-\kappa(x)) \leq \exp(-\psi(x)), \ x\geq 0.$$

Noting that $\kappa(\cdot)$ is an increasing function, one obtains
$$P(Y_{m,j}^+>\varepsilon)\geq Cmj\exp\left(-\kappa\left(\psi^{-1}\left(\ln(mj)\right)+\varepsilon \right) \right)$$
$$\times\exp\left( -Cmj\exp\left(-\kappa\left(\psi^{-1}\left(\ln(mj) \right)\right) \right)\right)$$
$$\geq Cmj\exp\left(-\kappa\left(\psi^{-1}\left(\ln(mj)\right)+\varepsilon \right) \right)\exp\left( -Cmj\exp\left(-\psi\left(\psi^{-1}\left(\ln(mj) \right)\right) \right)\right)$$
$$\geq Cmj\exp\left(-1\right)\exp\left(-\kappa\left( \psi^{-1}\left(\ln(mj)+\varepsilon\right) \right)\right).$$
Then, by assumption $(ii)$
$$P(Y_{m,j}^+>\varepsilon) \geq CC_1\exp(-1)mj\exp\left(-B\psi\left( \psi^{-1}(\ln(mj))\right)+\varepsilon\right).$$

It follows from assumption $(iii)$ that
$$\frac{\psi\left( \psi^{-1}\left(\ln(mj)\right) + \varepsilon\right)}{\psi\left(\psi^{-1}\left(\ln(mj)\right)\right)}=\frac{\int_0^{\psi^{-1}(\ln(mj))}q_{\varphi}(x)dx + \int_{\psi^{-1}(\ln(mj))}^{\psi^{-1}(\ln(mj))+\varepsilon}q_{\varphi}(x)dx}{\int_{0}^{\psi^{-1}(\ln(mj))}q_{\varphi}(x)dx}$$

$$\leq 1 + \frac{\varepsilon q_\varphi(\psi^{-1}(\ln(mj))+\varepsilon)}{\psi(\psi^{-1}\ln(mj))}\leq 1 +C_2\varepsilon,$$ as $q_\varphi(\cdot)$ is an increasing function.

Hence, it holds
$$P(Y_{m,j}^+>\varepsilon) \geq Cmj\exp\left(-B(1+C_2\varepsilon)\psi(\psi^{-1}(\ln(mj))) \right) = C(mj)^{1-B(1+C_2\varepsilon)}.$$

Therefore, \[\sum_{m=1}^{\infty}\sum_{j=1}^{\infty} (mj)^{-\alpha}P(Y_{m,j}>\varepsilon)\geq C\left( \sum_{m=1}^{\infty} m^{1-\alpha-B(1+C_2\varepsilon)} \right)^2 = +\infty,\] when $1-\alpha-B(1+C_2\varepsilon) > -1,$ which completes the proof.   \end{proof}

\section{Theoretical examples}~\label{sec5}
This section provides theoretical examples for important particular classes of $\varphi$-subgaussian distributions. Specifications of functions $\kappa(\cdot)$ and $\varphi(\cdot),$ such that  the obtained theoretical results hold true, are given.

\begin{example} 
{\rm Let $\{X_{k,n}, \ k,n\geq 1\}$ be a double array of standard Gaussian random variables. It is well-known that $Ee^{tX_{k,n}} =  e^{t^2/2}$ which implies that $\{X_{k,n}, \ k,n\geq 1\}$ is the double array of $\varphi$-subgaussian random variables with $\varphi(x) = x^2/2.$  The $\varphi$-subgaussian norm of a Gaussian variable equals to its standard deviation that is~1 in this example, i.e. $ \tau_{\varphi}(X_{k,n}) \equiv 1.$ The Young-Fenchel transform of $\varphi(\cdot)$ is $\psi(x) = x^2/2$ with the density $q_\varphi(x) = x.$

One can easily see that the condition \eqref{cor21} of Corollary \ref{rc} is satisfied. Indeed, for any positive $\varepsilon$ the following integral is finite
$$\int_0^{+\infty}\psi(x)q_{\varphi}(x)\exp\left(-\varepsilon q_{\varphi}(x)\right)dx = \int_0^{+\infty}\frac{x^3}{2}\exp\left(-\varepsilon x\right)dx < +\infty.$$

Let us show that the conditions of Corollary \ref{cor2} are satisfied too.

By \cite{Bi}, for all $x>0$ it holds

$$P(X_{k,n}\geq x) \geq \frac{1}{2\sqrt{2\pi}}(\sqrt{4+x^2} - x)e^{-\frac{x^2}{2}}$$
$$=\sqrt{\frac{2}{\pi}}\frac{e^{-\frac{x^2}{2}}}{\sqrt{4+x^2}+x}= \sqrt{\frac{2}{\pi}}e^{-\frac{x^2}{2}-\ln({\sqrt{4+x^2}+x})}.$$
By Remark \ref{rm6}  it means that $\kappa(x) = \frac{x^2}{2}+\ln({\sqrt{4+x^2}+x})$ and $C = \sqrt{\frac{2}{\pi}}.$ The function $\kappa(x)$ is increasing, positive and $\kappa(x)\geq\ln(2)$ for $x>0.$

As \[r(x) = \kappa'(x) = x + \frac{1+\frac{x}{\sqrt{4+x^2}}}{x+\sqrt{4+x^2}}=x+\frac{1}{\sqrt{4+x^2}}>0, \ x>0,\] it is also  positive.

It follows from
$$r'(x) = 1 - \frac{x}{(4+x^2)^{3/2}}>0, \ x>0,$$  that $r(x), \ x>0,$ is non-decreasing.

Also, it easy to see that $C_0(x) = -\ln(x+\sqrt{4+x^2}),  x > 0.$

Let us show that the assumption \eqref{cor21} is satisfied with these specifications of functions $\kappa(\cdot)$ and $\psi(\cdot)$. Indeed, by the change of variables $x = \psi^{-1}(\ln(y))-\varepsilon$ one obtains  $y = e^{\psi(x+\varepsilon)}$and
$$\int_A^{+\infty}\exp\left(-\frac{Cy}{2}\exp\left( -\kappa(\psi^{-1}(\ln(y))-\varepsilon)\right) \right)dy$$

\begin{equation}
\label{ex2eq}
=\int_{A^{'}}^{+\infty}q_{\varphi}(x+\varepsilon)\exp\left( \psi(x+\varepsilon) -\frac{C}{2}e^{\psi(x+\varepsilon) - \kappa(x)}\right)dx,
\end{equation}
where $A^{'} = \psi^{-1}(\ln(A))-\varepsilon.$

By Bernoulli's inequality
$$\psi(x+\varepsilon) - \kappa(x) = \frac{x^2}{2}\left(\left(1+\frac{\varepsilon}{x}\right)^2 - 1\right) - \ln(\sqrt{4+x^2}+x) \geq \varepsilon x - \ln(\sqrt{4+x^2}+x).$$

As a polynomial growth is faster than the logarithmic one, the integral in \eqref{ex2eq} is bounded from above by

$$\widetilde{C}\int_{A^{'}}^{+\infty}(x+\varepsilon)\exp\left(\frac{(x+\varepsilon)^2}{2} - e^{\widetilde{\varepsilon} x} \right)dx$$ for some $\tilde{C},\widetilde{\varepsilon}>0.$

As exponentials grow faster than polynomials, for sufficiently large $x$

$$\frac{(x+\varepsilon)^2}{2} - \exp\left(\widetilde{\varepsilon}x\right) \leq -{C'}\exp\left( \widetilde{\varepsilon} x \right)$$ and
$$\exp\left(-{C'} \exp\left( \widetilde{\varepsilon}x\right)\right) \leq \exp\left( -C''x \right),$$ for some positive constants ${C'}$ and $C''.$

The assumption \eqref{cor22} is satisfied too. Indeed, the assumption \eqref{cor22} takes the form

$$\int_{A}^{\infty}\psi(y)q_{\varphi}(y)\exp\left(\psi(y)-\frac{C}{2}\exp\left( C_0(y) +\varepsilon r(y-\varepsilon) \right)\right)dy$$
$$=\int_A^{+\infty}\frac{y^3}{2}\exp\left(\frac{y^2}{2} -\frac{C}{2(y+\sqrt{4+y^2})}\exp\left(\varepsilon\left(y-\varepsilon + \frac{1}{\sqrt{4+(y-\varepsilon)^2}}\right)\right)  \right).$$
The last integral is finite because
$$\frac{y^2}{2} -\frac{C}{2(y+\sqrt{4+y^2})} \exp\left(\varepsilon\left(y-\varepsilon + \frac{1}{\sqrt{4+(y-\varepsilon)^2}}\right)\right)<-\exp\left(\frac{\varepsilon y}{2}\right)$$ for sufficiently large $y.$

By Theorem \ref{th1} one gets that $\lim_{m\vee j\to\infty} Y_{m,n} =0$ a.s.}
\end{example}

\begin{example}
\label{ex}
{\rm  Let $\{X_{k,n}, \ k,n\geq 1\}$ be a double array of independent identically distributed reflected Weibull random variables with the probability density
\begin{equation*}
    p(x) =
        \frac{\theta}{2b}\left(\frac{|x|}{b}\right)^{\theta-1}e^{-(|x|/b)^\theta},\quad \theta > 0,\ b>0.
\end{equation*}
Consider reflected Weibull random variables with $\theta >1$ and $b>0.$ They belong to the $\varphi$-subgaussian class. Indeed, tails of reflected Weibull random variables equal
\[
P(|X_{k,n}| > x) = e^{-(\frac{x}{b})^\theta}, \ x\geq 0.
\]
Hence, by \cite[Corollary 4.1, p.~68]{BK} $\{X_{k,n}, \ k,n\geq 1\}$ is the double-array of $\varphi$-subgaussian random variables,  where
\[\psi(x) = \left( \frac{x}{b}\right)^\theta,\quad \varphi(x) = \frac{\theta-1}{\theta} \left( \frac{\theta}{b^\theta}\right)^{1/(\theta-1)}x^{\theta/(\theta-1)},\quad x\geq 0,\]
see \cite[Example 2.5, p.~46]{BK}, and $\tau_{\varphi}(X_{k,n}) \equiv c < +\infty.$ The density of $\psi(x)$ is $q_\varphi(x) = {\theta} x^{\theta-1}/{b^\theta}, \ x\geq 0.$

Let us chose a such value of the parameter $b$ that $\{X_{k,n}, \ k,n\geq 1\}$ is the double-array of $\varphi$-subgaussian random variables with  $\varphi$-subgaussian norms $\tau_{\varphi}(X_{k,n}) \equiv c \leq 1,$ see Section \ref{sec6}. We will show that in this case the conditions of Corollaries~{\rm 1} and {\rm 2} are satisfied.

The conditions of Corollary \ref{rc} are satisfied because $\tau_{\varphi}(X_{k,n}) \equiv c$ and the following integral is finite for all positive $\varepsilon$
$$\int_0^{+\infty}\psi(x)q_{\varphi}(x)\exp\left(-\varepsilon q_{\varphi}(x)\right)dx=\frac{\theta}{b^{2\theta}}\int_0^{+\infty}x^{2\theta-1}\exp\left(- \frac{\varepsilon\theta}{b^{\theta}}x^{\theta-1}\right)dx<+\infty.$$

Let us show that the conditions of Corollary \ref{cor2} are satisfied too. By Remark~\ref{rm6} and the equality $P(X_{k,n} > x) = \frac{1}{2}e^{-(\frac{x}{b})^\theta}, \ x\geq 0,$ it follows that $\kappa(x) = \psi(x) = \left(\frac{x}{b} \right)^\theta,$ $C = {1}/{2},$ and $r(x)  = q_\varphi(x).$ Hence, \[ \psi(x) - \kappa(cx) =(1-c^\theta)\left(\frac{x}{b} \right)^\theta\geq C_0(x) =0\] because $g(\cdot)\equiv c\leq 1.$

The assumption \eqref{cor21} can be rewritten as
$$\int_A^{+\infty}\exp\left(-\frac{y}{4}\exp\left(-\kappa\left( \psi^{-1}(\ln(y))-\varepsilon\right)\right)\right)dy$$
$$ = \int_A^{+\infty}\exp\left(-\frac{y}{4}\exp\left(-\psi\left( \psi^{-1}(\ln(y))-\varepsilon\right)\right)\right)dy.$$
Let use the change of variables $x = \psi^{-1}(\ln(y))-\varepsilon.$ Then, $y = e^{\psi(x+\varepsilon)}$ and the above integral equals to
$$\int_{A'}^{+\infty}q_{\varphi}(x+\varepsilon)\exp\left(\psi(x+\varepsilon)-\frac{e^{\psi(x+\varepsilon)-\psi(x)}}{4}\right)dx$$
\begin{equation}\label{exeq2}
 = \frac{\theta}{b^{\theta}} \int_{A'}^{+\infty}(x+\varepsilon)^{\theta-1}\exp\left(\frac{(x+\varepsilon)^{\theta}}{b^{\theta}} -\frac{1}{4}\exp\left(\frac{(x+\varepsilon)^\theta-x^\theta}{b^\theta} \right)\right)dx,
\end{equation} where $A' = \psi^{-1}(\ln(A))-\varepsilon.$

By Bernoulli's inequality
$$(x+\varepsilon)^\theta - x^\theta = x^\theta\left(\left(1+\frac{\varepsilon}{x}\right)^\theta-1\right)\geq \varepsilon\theta x^{\theta-1}$$ and the integral in \eqref{exeq2} is bounded by
$$\frac{\theta}{b^\theta}\int_{A'}^{+\infty}(x+\varepsilon)^{\theta-1}
\exp\left(\frac{(x+\varepsilon)^\theta}{b^{\theta}}-\frac{1}{4}\exp\left(\frac{\varepsilon \theta}{b^\theta} x^{\theta - 1} \right)\right)dx.$$

As exponentials grow faster than polynomials we obtain that for sufficiently large~$x$
$$\frac{(x+\varepsilon)^\theta}{b^\theta}-\frac{1}{4}\exp\left(\frac{\varepsilon \theta}{b^\theta} x^{\theta - 1} \right)\leq -C\exp\left( \frac{\varepsilon\theta}{b^\theta}x^{\theta-1}   \right)$$ and
$$\exp\left(-C\exp\left(\frac{\varepsilon\theta}{b^\theta}x^{\theta-1}  \right)\right) \leq \exp\left( - \widetilde{C} x^{\theta -1}\right)$$ for some positive constants $C$ and $\widetilde{C}.$

Finally, as for $\theta > 1$
$$\frac{\theta}{b^\theta}\int_{A'}^{\infty}(x+\varepsilon)^{\theta-1}\exp\left( -\widetilde{C}x^{\theta-1}\right)dx<+\infty$$ we obtain \eqref{cor21}.

Now, let us check the assumption \eqref{cor22}. In our case, it takes the form
$$\int_A^{+\infty}\psi(y)q_{\varphi}(y)\exp\left(\psi(y) -\frac{\exp\big( \varepsilon q_{\varphi}(y-\varepsilon) \big)}{4} \right)dy$$
$$=\frac{\theta}{b^\theta}\int_A^{+\infty}y^{2\theta-1}\exp\left(\left(\frac{y}{b}\right)^\theta - \frac{\exp\left(\frac{\varepsilon\theta}{b^\theta}(y-\varepsilon)^{\theta-1}\right)}{4}\right)dy.$$
Again, as $\theta > 1$ then $(\frac{y}{b})^\theta - \exp(\frac{\varepsilon\theta}{b^\theta}(y-\varepsilon)^{\theta-1}) <-y$ for sufficiently large $y,$ which means that the integral is finite. Thus, Corollaries {\rm 1} and {\rm 2} hold true. Moreover, by Theorem \ref{th1} one gets that $\lim_{m\vee j\to\infty} Y_{m,n} =0$ a.s.}

\end{example}

\section{Numerical examples}~\label{sec6}
This section provides numerical examples that confirm the obtained theoretical results. By simulating double arrays of random variables satisfying Theorem \ref{th1}, we show that the running maxima functionals of these double arrays converge to $0$, as the size of observation windows tends to infinity. As the rate of convergence is very slow to better illustrate asymptotic behaviour we selected arrays with constant  $\varphi$-subgaussian norms close to one.

Consider a double array $\{X_{k,n}, k,n \ge 1\}$ that consists of independent reflected Weibull random variables (see Example \ref{ex}) with the parameters $\theta = 9$ and $b = 1.25.$ These values of $\theta$ and $b$ were selected to get $\tau_\varphi(X_{k,n})\le 1$ as in  Example~\ref{ex}.  The probability density function of the underlying random variables $X_{k,n}, k,n \ge 1,$ and a realization of the double array  $\{X_{k,n}, k,n \ge 1\}$  in a square window are shown in Figure \ref{fig1}.

\begin{figure}[htb!]
\begin{subfigure}{0.5\textwidth}
  \centering
  \includegraphics[width=1\linewidth,height=6cm,trim=0 5mm 0 1cm,clip]{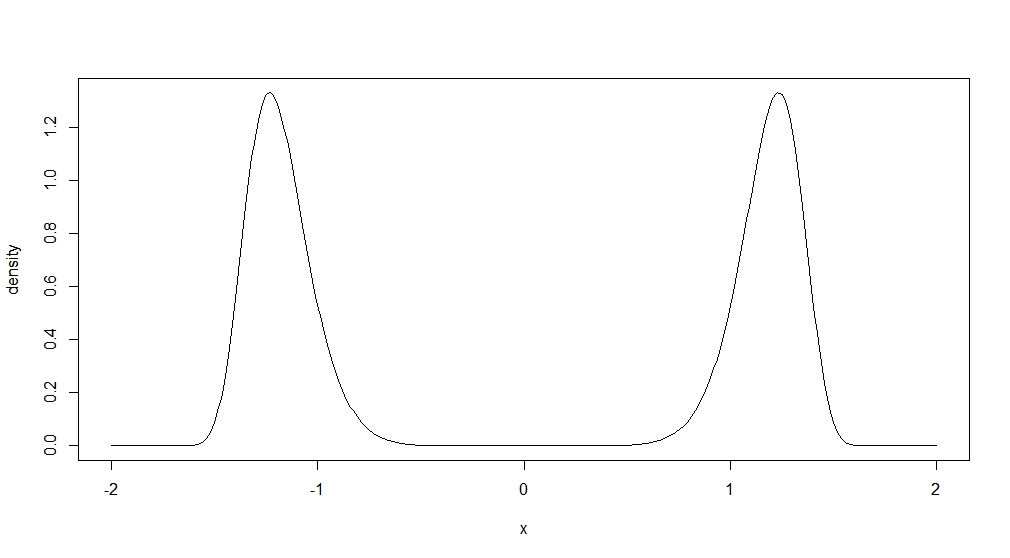}
  \caption {Probability density function}
  \label{fig1a}
\end{subfigure}%
\hspace{0mm}
\begin{subfigure}{0.6\textwidth}
  \includegraphics[width=1\linewidth,height=6cm,trim=4.5cm 0mm 1.2cm 0mm,clip]{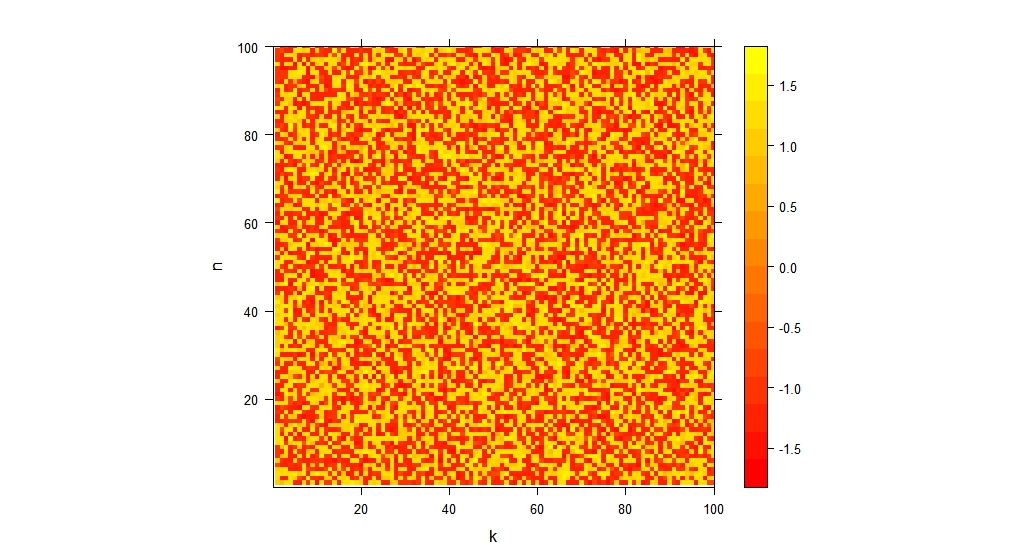}
  \caption{Realization of the double array}
  \label{fig1b}
\end{subfigure}
\caption{Double array of Weibull random variables}
\label{fig1}
\end{figure}

The underlying random variables $X_{k,n}, \ k,n\geq 1,$ are $\varphi$-subgaussian random variables with
\[\psi(x) = \left(\frac{x}{1.25}\right)^9,\quad \varphi(x) = \frac{8}{9}\left( \frac{9}{1.25^9}\right)^{1/8} x^{9/8},\quad x\geq 0.\]

A calculation of the $\varphi$-subgaussian norm by using Definition~\ref{def4} is not trivial in a general case and may require different approaches. The following method was used to estimate the $\varphi$-subgaussian  norm of $X_{k,n}, \ k,n \ge 1.$ By \cite[Lemma 4.2, p.~65]{BK} the $\varphi$-subgaussian norm allows the representation
\[
\tau_\varphi(X_{k,n}) = \sup_{\lambda \neq 0}\frac{\varphi^{(-1)}\left(\ln\left( E\exp(\lambda X_{k,n})\right)\right)}{|\lambda|}.
\]
For the reflected Weibull random variables the above expectation can be calculated~as
\[ E\exp(\lambda X_{k,n}) = \int_{\mathbb{R}}e^{\lambda x}p(x)dx = \int_{0}^{+\infty}e^{\lambda x}p(x)dx + \int_{0}^{+\infty}e^{-\lambda x}p(x)dx\]
\[ = \frac{1}{2}\left(MGF(\lambda)+(MGF(-\lambda)\right),\]
where $MGF(\cdot)$ denotes the moment generating function of the corresponding Weibull distribution and is given by
$$MGF(\lambda) =  \sum_{n=0}^{+\infty} \frac{\lambda^n b^n}{n!} \Gamma\left(1+\frac{n}{\theta} \right).$$
By using this representation one gets
$$E\exp(\lambda X_{k,n})=\frac{1}{2}\left(\sum_{n = 0}^{+\infty}\frac{\lambda^n b^n}{n!}\Gamma\left(1+\frac{n}{\theta}\right) + \sum_{n = 0}^{+\infty}\frac{(-\lambda)^n b^n}{n!}\Gamma\left(1+\frac{n}{\theta}\right) \right) $$
$$=\sum_{n = 0}^{+\infty}\frac{\lambda^{2n} b^{2n}}{(2n)!}\Gamma\left(1+\frac{2n}{\theta}\right).$$
Thus, for sufficiently large $M$ the $\varphi$-subgaussian norm of the reflected Weibull random variables can be approximated by
$$\tau_\varphi(X_{k,n}) \approx  \sup_{\lambda \neq 0}\frac{\varphi^{(-1)}\left(\ln\left(\sum_{n = 0}^{M}\frac{\lambda^{2n} b^{2n}}{(2n)!}\Gamma\left(1+\frac{2n}{\theta}\right)\right)\right)}{|\lambda|}$$
$$=\sup_{\lambda \neq 0} \left(\frac{9}{8}\left(\frac{1.25^9}{9}\right)^{1/8}\frac{\left(\ln\left(\sum_{n = 0}^{M}\frac{\lambda^{2n} b^{2n}}{(2n)!}\Gamma\left(1+\frac{2n}{\theta}\right)\right)\right)^{8/9}}{|\lambda|}\right).$$

As $(2n)!$ increases very quickly even small values of $M$ provide a very accurate approximation of the series and the norm.

Figure \ref{fig_norm} shows the graph of the function under the supremum and the supremum value for $M = 50.$ As the function is symmetric only the range $\lambda>0$ is plotted. For $\theta = 9$ and $b = 1.25$ the supremum is attained at $\lambda=8.5801$ and the estimated value of the  norm $\tau_\varphi(X_{k,n})$ is 0.997.   Thus, the double array $\{X_{k,n}, k,n \ge 1\}$ satisfies the conditions of Theorem \ref{th1}, see Example~\ref{ex}.

\begin{figure}[htb!]
  \centering
  \includegraphics[width=1\linewidth,height=6cm, width = 8cm, trim=0 4mm 0 20mm,clip]{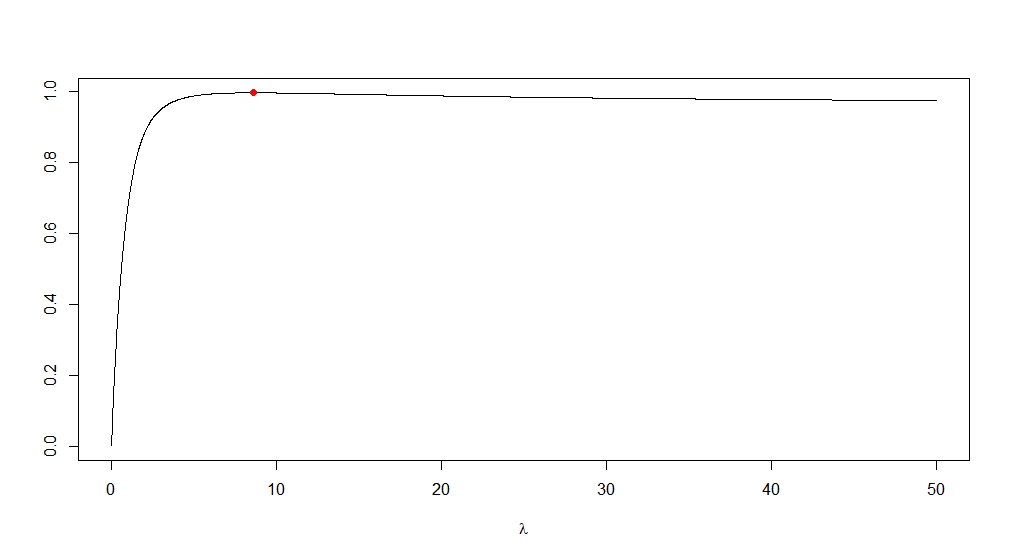}
  \caption {Estimation of $\tau_\varphi(X_{k,n}).$}
  \label{fig_norm}
\end{figure}

\begin{figure}[htb!]
\begin{subfigure}{0.5\textwidth}
  \centering
  \includegraphics[width=1\linewidth,height=7cm,trim=0 3mm 0 0,clip]{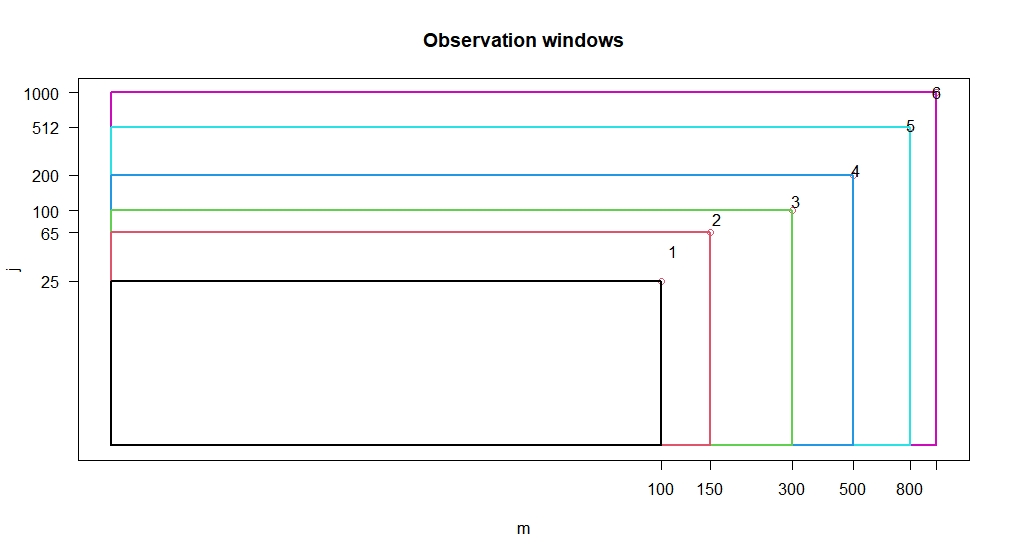}
  \caption {The first set}
  \label{fig2a}
\end{subfigure}%
\begin{subfigure}{0.5\textwidth}
  \centering
  \includegraphics[width=1\linewidth,height=7cm,trim=0 4mm 0 0mm,clip]{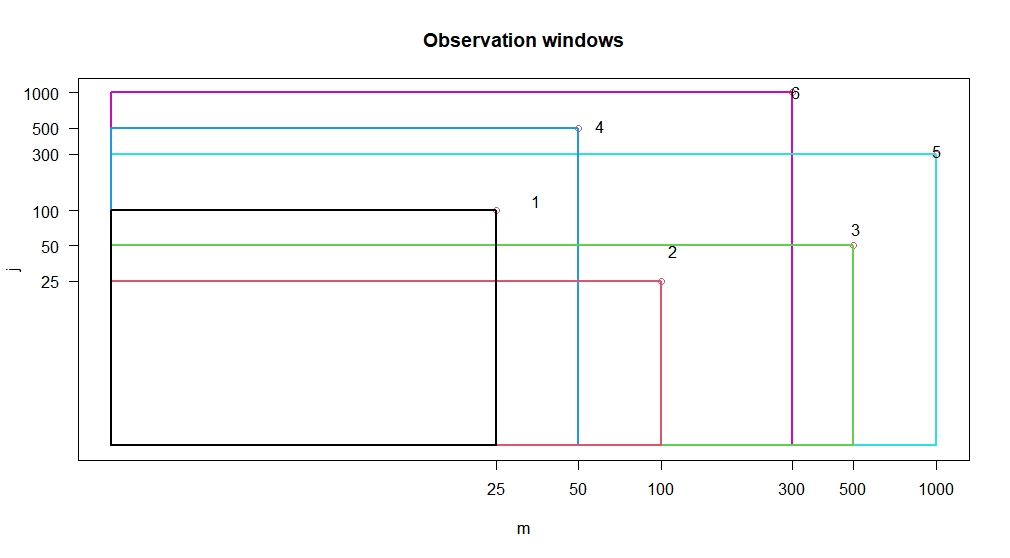}
  \caption{The second set}
  \label{fig2b}
\end{subfigure}
\caption{Observation windows.}
\label{fig2}
\end{figure}

Then, 1000 realizations of the double array $\{X_{k,n}, k,n \ge 1\}$ in the square region $\square(1000) = \{(m, j) : m,j \leq 1000, m,j \in\mathbb{N}\}$ were generated. Using the obtained realizations of the double array, values of $Y_{m,j}$ were computed for two sets of observation windows. The windows are shown in Figure~\ref{fig2} by using logarithmic scales for $x$ and $y$ coordinates. For the set of observation windows in Fig.~\ref{fig2a} and a realization of the reflected Weibull random array, the corresponding running maximas are shown in Fig \ref{fig3a}. For all rectangular observation windows inside $\square(1000)$ locations of maximas are shown in Fig~\ref{fig3b}. The locations are very sparse and the majority of them is concentrated closely to the left and bottom borders of $\square(1000).$

\begin{figure}[htb!]
\vspace{-0.5cm}
\begin{subfigure}{0.48\textwidth}
  \centering
  \includegraphics[width=1\linewidth,height=6.7cm,trim=0 0mm 0 0,clip]{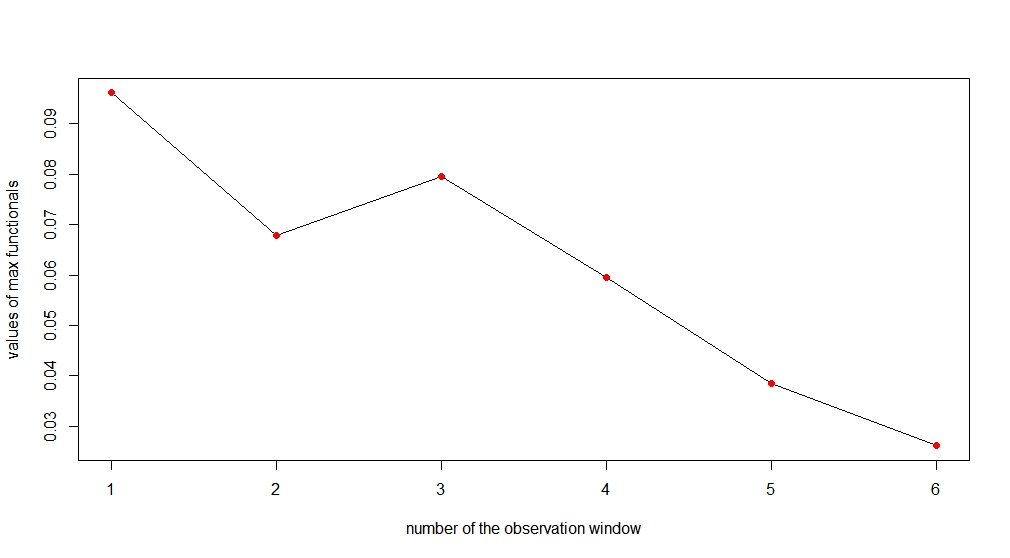}
  \caption {Running maxima $Y_{m,j}$ for the first set of windows}
  \label{fig3a}
\end{subfigure}%
\hspace{1mm}
\begin{subfigure}{0.48\textwidth}
  \centering
  \includegraphics[width=1\linewidth,height=6.7cm,trim=0 4mm 0 4mm,clip]{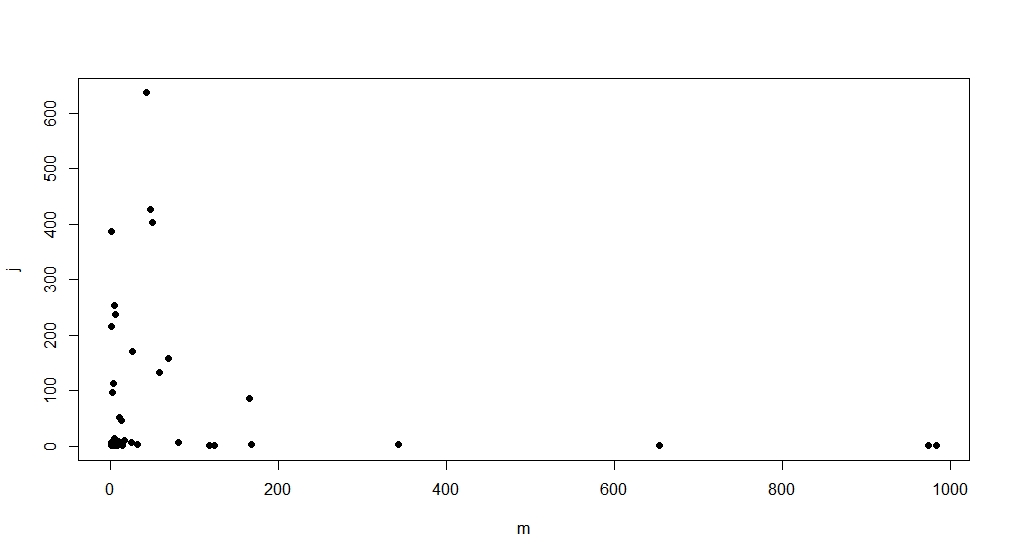}
  \caption{Locations of maximas for all rectangular subwindows in $\square(1000)$}
  \label{fig3b}
\end{subfigure}
\caption{Running maxima of a realization over a set of windows}
\label{fig3}
\end{figure}

For the simulated 1000 realizations and the corresponding sets of the observation windows from Figure~\ref{fig2} the box plots of running maxima functionals $Y_{m,j}$   are shown in Figure~\ref{fig4}. It is clear that the distribution of the running maxima concentrates around zero when the size of the observation window increases, but the rate of convergence seems to be rather slow.
\begin{figure}[htb!]
\begin{subfigure}{0.45\textwidth}
  \centering
  \includegraphics[width=1\linewidth,height=6cm,trim=0 4mm 0 20mm,clip]{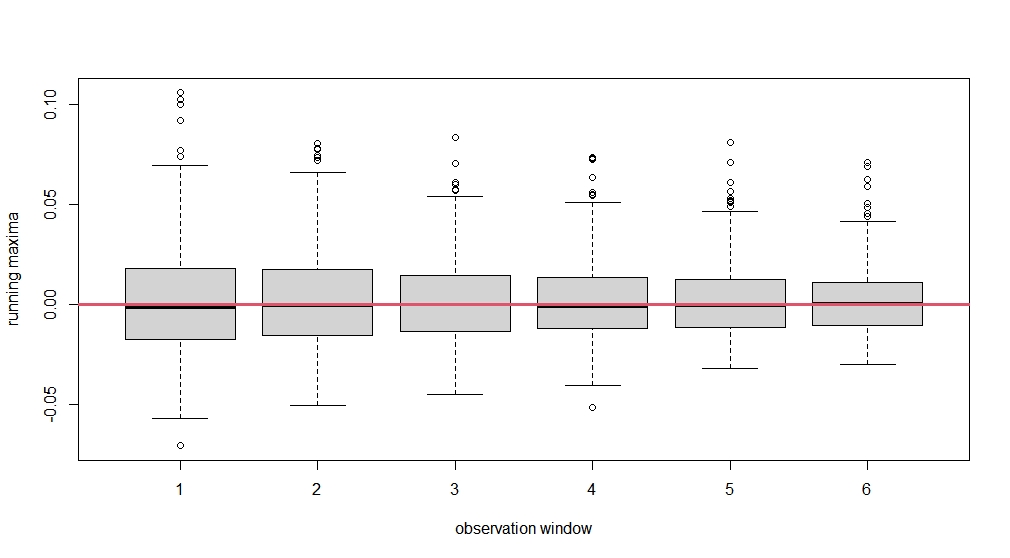}
  \caption {The first case}
  \label{fig4a}
\end{subfigure}%
\hspace{12mm}
\begin{subfigure}{0.45\textwidth}
  \centering
  \includegraphics[width=1\linewidth,height=6cm,trim=0 4mm 0 20mm,clip]{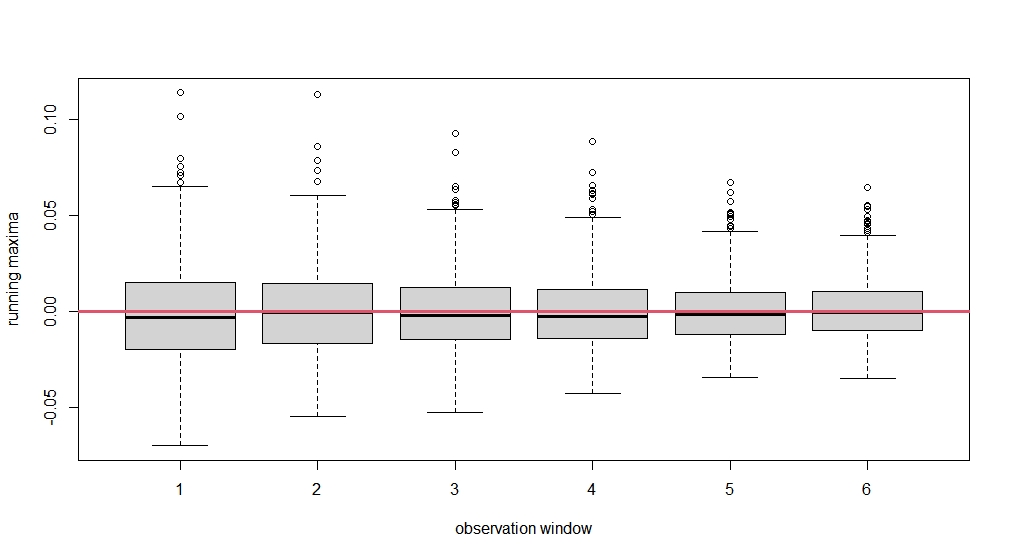}
  \caption{The second case}
  \label{fig4b}
\end{subfigure}
\caption{Box plots of running maximas for two sets of observation windows.}
\label{fig4}
\end{figure}

Table~\ref{rmse1} shows the corresponding Root Mean Square Error (RMSE) of  the running maxima functionals $Y_{m,j}$ from Figure~\ref{fig4}, the table confirms the convergence of $Y_{m,j}$  to zero when the observation window increases.

\begin{table}[hb]
\begin{center}
 \begin{tabular}{|c|c c c c c c|}
 \hline
$\rm{Observation \ window}$ \rule[-2mm]{0pt}{7mm}& 1 & 2 & 3 & 4 & 5 & 6  \\
 \hline
The first case \rule[-2mm]{0pt}{7mm} & 0.026 & 0.022 & 0.021 & 0.019 &  0.017 & 0.016  \\ [1ex]
 \hline
The second case \rule[-2mm]{0pt}{7mm} & 0.038 & 0.032 & 0.024 & 0.024 & 0.018 & 0.017  \\ [1ex]
\hline
\end{tabular}
 \caption{RMSE of  $Y_{m,j}$.\label{rmse1}}
 \end{center}\vspace{-0.5cm}
\end{table}

\begin{figure}[htb!]
  \centering
  \includegraphics[width=1\linewidth,height=6.5cm, width = 8cm, trim=0 8mm 0 20mm,clip]{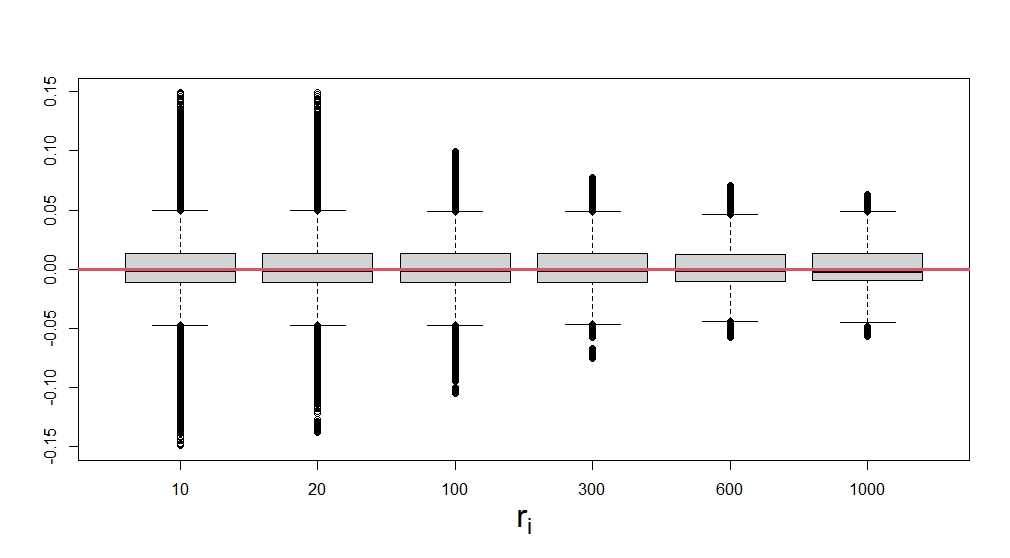}
  \caption {Boxplots of running maximas $Y_{m,j}$ for 6 groups.}
  \label{fig5}
\end{figure}

Finally, to demonstrate the $\lim(\max)$ convergence 1000 simulated realizations of the reflected Weibull double array were used. The running maxima functionals $Y_{m,j}$ were calculated for all possible pairs $(m,j),$  $m,j = 1,2,...,1200.$ In Figure~\ref{fig5} the boxplots of the obtained values of $Y_{m,j}$ were computed for 6 groups depending on values of the parameter $r = m \vee j$ in the corresponding observation subwindows. The lower bound for the parameter $r$ increases with the group  number. Namely, in group $i=1,...,6$ the values $r\ge r_i,$ where $r_i=10,20,100,300,600, 1000.$ The obtained boxplots  in Figure~\ref{fig5} confirm the $\lim(\max)$ convergence.

\section{ Conclusions and the future studies}~\label{sec7}
The asymptotic behaviour of running maxima of random double arrays was investigated. The conditions of the obtained results allow to consider a wide class of $\varphi$-subgaussian random fields and are weaker than even in the known results for the one-dimensional case. The rate of convergence was also studied. The results were derived for a general class of rectangular  observation windows and $\lim(\max)$ convergence.

In the future studies, it would be interesting to extend the obtained results to:

- the case of $n$-dimension arrays,

- other types of observation windows,

- continuous $\varphi$-subgaussian random fields,

- different types of dependencies.

 \section*{Acknowledgements}
 This research was supported by La Trobe University SEMS CaRE Grant "Asymptotic analysis for point and interval estimation in some statistical models". 
 
 This research includes computations using the Linux computational cluster Gadi of the National Computational Infrastructure (NCI), which is supported by the Australian Government and La Trobe University.


\begin{thebibliography}{}

\bibitem[Birnbaum, 1942]{Bi}
Birnbaum, Z. (1942).
\newblock {An inequality for Mill's ratio.}
\newblock {\em Ann. Math. Statist.} 13(2): 245-246.
\newblock DOI: \href{https://doi.org/10.1214/aoms/1177731611}{10.1214/aoms/1177731611}. 

\bibitem[Borovkov et~al., 2017]{BM} Borovkov, K., Mishura, Yu.,  Novikov, A.,  and Zhitlukhin, M. (2017) 
\newblock {Bounds for expected maxima of Gaussian processes and their discrete approximations.} 
\newblock {\em Stochastics,} 89(1): 21--37.
\newblock DOI: \href{https://doi.org/10.1080\%2F17442508.2015.1126282}{10.1080/17442508.2015.1126282}.

\bibitem[Buldygin and Kozachenko, 2000]{BK}
Buldygin, V. and Kozachenko, Y. (2000).
\newblock {\em {Metric Characterization of Random Variables and Random
  Processes}}.
\newblock Providence, R.I.:
\newblock American Mathematical Society.
\newblock DOI: \href{https://doi.org/10.1090\%2Fmmono\%2F188}{10.1090/mmono/188}.

\bibitem[Cs\'{a}ki and Gonchigdanzan, 2002]{CG} 
Cs\'{a}ki, E., and Gonchigdanzan, K. (2002) 
\newblock Almost sure limit theorems for the maximum of stationary Gaussian sequences. 
\newblock {\em Statist. Probab. Lett.} 58(2): 195--203.
\newblock DOI: \href{https://doi.org/10.1016\%2Fs0167-7152\%2802\%2900128-1}{10.1016/s0167-7152(02)00128-1}.

\bibitem[Donhauzer, Olenko and Volodin]{DOA}
Donhauzer, I., Olenko, A., and Volodin, A. (2020).
\newblock Strong law of large numbers for functionals of random fields with
  unboundedly increasing covariances.
\newblock {\em To appear in Commun. Stat. Theory Methods.} 
\newblock DOI: \href{https://www.tandfonline.com/doi/full/10.1080/03610926.2020.1868515}{10.1080/03610926.2020.1868515}.

\bibitem[Dudley, 1967]{dud0}
Dudley, R. (1967).
\newblock The sizes of compact subsets of {H}ilbert space and continuity of
  {G}aussian processes.
\newblock {\em J. Funct. Anal.} 1(3):290--330.
\newblock DOI: \href{https://doi.org/10.1016\%2F0022-1236\%2867\%2990017-1}{10.1016/0022-1236(67)90017-1}. 

\bibitem[Embrechts  et~al., 1997]{EKM} 
Embrechts, P. Kl\"{u}ppelberg, C., and  Mikosch, T. (1997). 
\newblock {\em {Modelling Extremal Events for Insurance and Finance.}} \newblock Berlin:
\newblock Springer-Verlag.
\newblock DOI: \href{https://doi.org/10.1007\%2F978-3-642-33483-2\_7}{10.1007/978-3-642-33483-2-7}. 


\bibitem[Fernique, 1975]{fern}
Fernique, X. (1975).
\newblock Regularit{\'e} des trajectoires des fonctions al{\'e}atoires
  gaussiennes.
\newblock {\em In: Ecole d’Et{\'e} de Probabilit{\'e}s de Saint-Flour IV—1974} (pp. 1-94).  Berlin: Springer.
\newblock DOI: \href{https://doi.org/10.1007\%2Fbfb0080190}{10.1007/bfb0080190}.


\bibitem[Giuliano, 1995]{Ri}
Giuliano, R. (1995).
\newblock Remarks on maxima of real random sequences.
\newblock {\em Note Mat.} 15(2):143--145.
\newblock DOI: \href{https://doi.org/10.1285/i15900932v15n2p143}{10.1285/i15900932v15n2p143}.

\bibitem[Giuliano et~al., 2013]{RLA}
Giuliano, R., Ngamkham, T., and Volodin, A. (2013).
\newblock On the asymptotic behavior of the sequence and series of running
  maxima from a real random sequence.
\newblock {\em Stat. Probabil. Lett.} 83(2):534--542.
\newblock DOI: \href{https://doi.org/10.1016\%2Fj.spl.2012.10.010}{10.1016/j.spl.2012.10.010}.

\bibitem[Giuliano and Macci, 2014]{GM} 
Giuliano, R. and Macci, C. (2014) Large deviation principles for sequences of maxima and minima. 
\newblock {\em Comm. Statist. Theory Methods.} 43(6): 1077--1098.
\newblock DOI: \href{https://doi.org/10.1080\%2F03610926.2012.668606}{10.1080/03610926.2012.668606}.

\bibitem[Hoffmann-J{\o}rgensen, 1994]{HJ}
Hoffmann-J{\o}rgensen, J. (1994).
\newblock {\em {Probability With a View Toward Statistics}}.
\newblock  New York: Chapman \& Hall.
\newblock DOI: \href{https://doi.org/10.1007\%2F978-1-4899-3019-4}{10.1007/978-1-4899-3019-4}.


\bibitem[Hu et~al., 2020]{HRVZ}
Hu, T., Rosalsky, A., Volodin, A., and Zhang, S. (2020).
\newblock A complete convergence theorem for row sums from arrays of rowwise
  independent random elements in {R}ademacher type $p$ {B}anach spaces.
\newblock {\em To appear in Stoch. Anal. Appl.}
\newblock DOI: \href{https://doi.org/10.1080\%2F07362994.2020.1791721}{10.1080/07362994.2020.1791721}.



\bibitem[Hu et~al., 2019]{HRV}
Hu, T., Rosalsky, A., and Volodin, A. (2019).
\newblock Complete convergence theorems for weighted row sums from arrays of
  random elements in {R}ademacher type $p$ and martingale type $p$ {B}anach spaces.
\newblock {\em Stoch. Anal. Appl.} 37(6):1092--1106.
\newblock DOI: \href{https://doi.org/10.1080\%2F07362994.2019.1641414}{10.1080/07362994.2019.1641414}.


\bibitem[Kahane, 1960]{kah}
Kahane, J. (1960).
\newblock Propri{\'e}t{\'e}s locales des fonctions {\`a} s{\'e}ries de {F}ourier
  al{\'e}atoires.
\newblock {\em Studia Math.} 19(1):1--25.
\newblock DOI: \href{https://doi.org/10.4064\%2Fsm-19-1-1-25}{10.4064/sm-19-1-1-25}.


\bibitem[Klesov, 2014]{Kl}
Klesov, O. (2014).
\newblock {\em {Limit Theorems for Multi-Indexed Sums of Random Variables}}.
\newblock Heidelberg: Springer.
\newblock DOI: \href{https://doi.org/10.1007\%2F978-3-662-44388-0}{10.1007/978-3-662-44388-0}.


\bibitem[Kozachenko and Olenko, 2016a]{KO2}
Kozachenko, Y. and Olenko, A. (2016).
\newblock Aliasing-truncation errors in sampling approximations of sub-gaussian
  signals.
\newblock {\em IEEE Trans. Inf. Theory}. 62(10):5831--5838.
\newblock DOI: \href{https://doi.org/10.1109\%2Ftit.2016.2597146}{10.1109/tit.2016.2597146}.


\bibitem[Kozachenko and Olenko, 2016]{KO}
Kozachenko, Y. and Olenko, A. (2016).
\newblock Whittaker--Kotel'nikov--Shannon approximation of
  $\varphi$-sub-gaussian random processes.
\newblock {\em J. Math. Anal. Appl.}
  443(2):926--946.
\newblock DOI: \href{https://doi.org/10.1109\%2Ftit.2016.2597146}{10.1109/tit.2016.2597146}.


\bibitem[Kozachenko et~al., 2015]{KOP}
Kozachenko, Y., Olenko, A., and Polosmak, O. (2015).
\newblock Convergence in $L_p([0, t])$ of wavelet expansions of
  $\varphi$-sub-gaussian random processes.
\newblock {\em Methodol. Comput. Appl. Probab.}
  17(1):139--153.
\newblock DOI: \href{https://doi.org/10.1007\%2Fs11009-013-9346-7}{10.1007/s11009-013-9346-7}.

\bibitem[Kratz, 2006]{kra}
Kratz, M. (2006).
\newblock Level crossings and other level functionals of stationary Gaussian processes.
\newblock {\em Probab. Surveys}.
    3:230--288.
\newblock DOI: \href{https://doi.org/10.1214\%2F154957806000000087}{10.1214/154957806000000087}.


\bibitem[Leadbetter et~al., 1983]{lead}
Leadbetter, M., Lindgren, G. and Rootzén, H. (1983).
\newblock {\em Extremes and Related Properties of Random Sequences and Processes.}
New York: Springer.
\newblock DOI: \href{https://doi.org/10.1007\%2F978-1-4612-5449-2}{10.1007/978-1-4612-5449-2}.



\bibitem[Ledoux and Talagrand, 2013]{led}
Ledoux, M. and Talagrand, M. (2013).
\newblock {\em {Probability in {B}anach Spaces: Isoperimetry and Processes}}.
\newblock Berlin: Springer.
\newblock DOI: \href{https://doi.org/10.1007/978-3-642-20212-4}{10.1007/978-3-642-20212-4}.



\bibitem[Pickands, 1967]{Pi}
Pickands, J. (1967).
\newblock Maxima of stationary {G}aussian processes.
\newblock {\em  Z. Wahrscheinlichkeitstheor. verw. Geb.} 7(3):190--223.
\newblock DOI: \href{https://doi.org/10.1007\%2Fbf00532637}{10.1007/bf00532637}.

\bibitem[Piterbarg, 1996]{Pit}
Piterbarg, V.  (1996).
\newblock {\em Asymptotic Methods in the Theory of Gaussian Processes and Fields.}
Providence, RI: American Mathematical Society.
\newblock DOI: \href{https://doi.org/10.1090\%2Fmmono\%2F148}{10.1090/mmono/148}.


\bibitem[Talagrand, 2014]{Tal}  Talagrand, M. (2014). 
\newblock {\em Upper and Lower Bounds for Stochastic Processes. Modern Methods and Classical Problems.}
\newblock Heidelberg: Springer.
\newblock DOI: \href{https://doi.org/10.1007\%2F978-3-642-54075-2}{10.1007/978-3-642-54075-2}.

\end{thebibliography}
\end{document}